\title[ ]{ANDERSON LOCALIZATION FOR THE MARYLAND MODEL WITH LONG RANGE INTERACTIONS }
\author{Jia Shi}
\address[Jia Shi]{School of Mathematical Sciences,
Fudan University,
Shanghai 200433, China} \email{15110180007@fudan.edu.cn}
\author{Xiaoping Yuan}
\address[Xiaoping Yuan]{School of Mathematical Sciences,
Fudan University,
Shanghai 200433, China} \email{xpyuan@fudan.edu.cn}
\keywords{Anderson localization, the Maryland model.}
\theoremstyle{plain}
\newtheorem{thm}{Theorem}[section]
 \newtheorem{lem}[thm]{Lemma}
 \newtheorem{prop}[thm]{Proposition}
 \newtheorem{rem}[thm]{Remark}
 \numberwithin{equation}{section}
\begin{document}

% \newcommand{\N}{\mathbb{N}}
%%% ----------------------------------------------------------------------

\begin{abstract}
In this paper, we establish Anderson localization for the Maryland model with long range interactions.
\end{abstract}

%%% ----------------------------------------------------------------------
\maketitle
%%% ----------------------------------------------------------------------
%\tableofcontents
\section{Introduction and main result}

Quasi-periodic Schr\"odinger operators arise in physics. For example, we can study
\begin{equation}\label{1}
H=v_n\delta_{nn'}+\Delta,
\end{equation}
where $v_n$ is a quasi-periodic potential and $\Delta$ is the lattice Laplacian on $\mathbb{Z}$
\begin{equation*}
\Delta(n,n')=1, |n-n'|=1,\quad \Delta(n,n')=0, |n-n'|\neq 1.
\end{equation*}

Anderson localization means that $H$ has pure point spectrum with exponentially decaying eigenfunctions.
Since there are many papers on this topic, we only mention some results here. For more about dynamics and spectral theory of quasi-periodic Schr\"odinger-type operators, see the survey \cite{MJ}.

Let
\begin{equation}\label{2}
v_n=\lambda v(x+n\omega) ,
\end{equation}
where $v$ is a nonconstant real analytic potential on $\mathbb{T}$.
Fix $x=x_0$, Bourgain and Goldstein \cite{BG} proved that if $\lambda >\lambda_0$, for almost all $\omega$,
$H$ will satisfy Anderson localization. Their argument is based on a combination of large deviation estimates and general facts on semi-algebraic sets.
The method there depends explicitly on the fundamental matrix and Lyapounov exponent.
Their result is non-perturbative, which means that $\lambda_0$ does not depend on $\omega$.
By multi-scale method, Bourgain, Goldstein and Schlag \cite{BGS}  proved Anderson localization for Schr\"odinger operators on $\mathbb{Z}^{2}$
\begin{equation*}
H(\omega_1,\omega_2;\theta_1,\theta_2)=\lambda v(\theta_1+n_1\omega_1,\theta_2+n_2\omega_2)+\Delta.
\end{equation*}
Later, Bourgain \cite{B07} proved Anderson localization for quasi-periodic lattice {S}chr\"odinger operators on $\mathbb{Z}^{d}$, $d$ arbitrary.
Recently, using more elaborate semi-algebraic arguments, Bourgain and Kachkovskiy \cite{BK}
proved Anderson localization for two interacting quasi-periodic particles.

More generally, we can consider the long range model
\begin{equation}\label{3}
H=v(x+n\omega)\delta_{nn'}+\epsilon S_\phi,
\end{equation}
where $S_\phi$ is a Toeplitz operator
\begin{equation}\label{4}
S_\phi(n,n')=\hat{\phi}(n-n')
\end{equation}
and $v$ is real analytic, nonconstant on $\mathbb{T}$.
Assume $\phi$  real analytic satisfying
\begin{equation}\label{5}
  |\hat{\phi}(n)|<e^{-\rho|n|}, \quad\forall n \in \mathbb{Z}
\end{equation}
for some $\rho>0$, Bourgain \cite{B05} proved that there is $\epsilon_{0}=\epsilon_{0}(\rho)>0$, such that if
$0<\epsilon<\epsilon_{0}$, $H$ satisfies Anderson localization.
This result is non-perturbative, since $\epsilon_{0}$ does not depend on $\omega$.
Note that in the long range case, we cannot use the fundamental matrix formalism.
The method in \cite{B05} can also be used to establish Anderson localization for band Schr\"odinger operators \cite{BJ}
\begin{equation*}
H_{(n,s),(n',s')}(\omega,\theta)=\lambda v_s(\theta+n\omega)\delta_{nn'}\delta_{ss'}+\Delta,
\end{equation*}
where $\{v_s|1\leq s\leq b\}$ are real analytic, nonconstant on $\mathbb{T}$.
Recently, this method was used to prove Anderson localization for
the long-range quasi-periodic block operators \cite{JSY}
\begin{equation*}
(H(x)\vec{\psi})_n=\epsilon\sum_{k\in\mathbb{Z}}W_k\vec{\psi}_{n-k}+V(x+n\omega)\vec{\psi}_n,
\end{equation*}
where
$$V(x)=\mathrm{diag} (v_1(x),\ldots,v_l(x)),$$
$v_i(x)\  (1\leq i\leq l)$ are nonconstant real analytic functions on $\mathbb{T}$ and $W_k\  (k\in\mathbb{Z})$
are $l\times l$ matrices satisfying $W_k^*=W_{-k}$, $\|W_k\|\leq e^{-\rho|k|},\rho>0$.

Note that in the cases above, $v$ is a bounded potential and $H$ is a bounded operator.

Now let
\begin{equation}\label{6}
v_n=\lambda \tan \pi(x+n\omega) ,
\end{equation}
we have the Maryland model
\begin{equation}\label{7}
H=\lambda \tan \pi(x+n\omega)\delta_{nn'}+\Delta ,
\end{equation}
originally proposed by Grempel, Fishman and Prange \cite{GFP}.
In this case, $v_n$ is unbounded and $H$ is an unbounded operator. We will always assume
\begin{equation}\label{8}
x+n\omega-\frac{1}{2} \notin \mathbb{Z},\quad \forall n \in \mathbb{Z}
\end{equation}
to make the operator well defined. In \cite{BLS}, Bellissard, Lima and Scoppola used essentially techniques based on KAM method to prove
Anderson localization for the Maryland model.
Recently, using transfer matrix and Lyapounov exponent, Jitomirskaya and Yang \cite{JY} developed a constructive method to prove Anderson localization for the Maryland model.

More generally, we can consider the long range case of the Maryland model.
In this paper, we study
\begin{equation}\label{9}
H(x)=\tan\pi(x+n\omega)\delta_{nn'}+\epsilon S_\phi,
\end{equation}
which is the one-dimensional tight-binding model proposed by Grempel, Fishman and Prange, see Equation (1) in \cite{GFP}.

We will prove the following result:
\begin{thm}
Consider a lattice operator $H_{\omega}(x)$ of the form (\ref{9}).
Assume $\omega \in DC$ (diophantine condition),
\begin{equation}\label{11}
\|k\omega\|>c| k |^{-A},\quad \forall k\in\mathbb{Z}\setminus\{0\}
\end{equation}
and $\phi$  real analytic satisfying
\begin{equation}\label{12}
  |\hat{\phi}(n)|<e^{-\rho|n|},\quad \forall n \in \mathbb{Z}
\end{equation}
for some $\rho>0$. Fix $x_0\in\mathbb{T}$. Then there is $\epsilon_{0}=\epsilon_{0}(\rho)>0$, such that if
$0<\epsilon<\epsilon_{0}$, for almost all $\omega \in DC$, $H_{\omega}(x_0)$ satisfies Anderson localization.
\end{thm}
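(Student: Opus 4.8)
The plan is to run Bourgain's non-perturbative Green's function method for long-range quasi-periodic operators (as in \cite{B05}, and in the block-operator setting of \cite{JSY}), the main new ingredient being a Cayley-transform preprocessing of \eqref{9} that tames the unbounded potential $\tan\pi(x_0+n\omega)$, compactifies the energy parameter, and restores the semialgebraic structure the machinery needs. \emph{Step 1 (reduction).} Since $S_\phi$ is bounded on $\ell^2(\Z)$, with $\|S_\phi\|\le\sum_m|\hat\phi(m)|=:C_\rho<\infty$ by \eqref{12}, and multiplication by $\tan\pi(x_0+n\omega)$ is self-adjoint on its maximal domain (finite at every site by \eqref{8}), the operator $H_\omega(x_0)$ is self-adjoint by Kato--Rellich, so Schnol's theorem applies. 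Hence it suffices to prove that for a.e.\ $\omega\in DC$, every $E\in\R$ and every polynomially bounded solution $\psi$ of $H_\omega(x_0)\psi=E\psi$ decays exponentially.

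\emph{Step 2 (Cayley transform).} From $\tan\pi t=i\frac{1-e^{2\pi it}}{1+e^{2\pi it}}$ one obtains, for real $E$,
\[
\frac{1}{\tan\pi(x_0+n\omega)-E}=\frac{1}{i-E}\cdot\frac{1+e^{2\pi i(x_0+n\omega)}}{1-e^{2\pi i(\beta+n\omega)}},\qquad e^{2\pi i\alpha(E)}:=\frac{i+E}{i-E},\quad \beta:=x_0+\alpha(E),
\]
where $\alpha(\cdot)$ is a real-analytic bijection of $\R$ onto $\T\setminus\{\tfrac12\}$. Thus the (compactified) energy axis is identified with the circle $\beta\in\T$; the resonant sites of the diagonal part are exactly those $n$ with $\|\beta+n\omega\|$ small (the poles of $\tan$ being included, via $\beta\approx x_0+\tfrac12$), while the numerator satisfies $|1+e^{2\pi i(x_0+n\omega)}|\le2$ and is small precisely at those poles, which only helps. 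Two consequences feed the analysis: clearing denominators, $\det\bigl((H_\omega(x_0))_{[a,b]}-E\bigr)$ is a polynomial in $E$ of degree $b-a+1$ (the semialgebraic input for Step 4); and, taking Fourier coefficients after multiplying the eigenvalue equation through by $1+e^{2\pi i(x_0+n\omega)}\neq0$, $H\psi=E\psi$ becomes the scalar cocycle relation $f(\theta)=g_E(\theta)f(\theta+\omega)$ for $f=\widehat{\psi}$, with $g_E(\theta)=e^{2\pi i\beta}\bigl(1-\tfrac{\epsilon}{i+E}\phi(\theta+\omega)\bigr)\big/\bigl(1+\tfrac{\epsilon}{i-E}\phi(\theta)\bigr)$, whose modulus telescopes ($|g_E(\theta)|=\rho_E(\theta+\omega)/\rho_E(\theta)$ with $\rho_E=|1+\tfrac{\epsilon}{i-E}\phi|$), forcing $|f|$ to be a fixed positive analytic function up to a constant --- the residual Maryland exactness that keeps the problem essentially one-dimensional.

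\emph{Steps 3--5 (Green's functions, multi-scale induction, conclusion).} At the initial scale, on a box $\Lambda$ of bounded length containing no site with $\|\beta+n\omega\|<\eta_0$ (a fixed small constant), the unperturbed truncated resolvent is diagonal with norm $<\eta_0^{-1}$, and the Neumann expansion $(H_\Lambda-E)^{-1}=\sum_{k\ge0}(V_\Lambda-E)^{-1}\bigl(-\epsilon\,S_\phi|_\Lambda(V_\Lambda-E)^{-1}\bigr)^k$ converges once $\epsilon<\epsilon_0(\rho)$; summing paths and using $|\hat\phi(m)|<e^{-\rho|m|}$ gives $|(H_\Lambda-E)^{-1}(n,m)|<e^{-\gamma|n-m|}$ with $\gamma$ comparable to $\rho$ and independent of $\epsilon$ --- this fixes $\epsilon_0=\epsilon_0(\rho)$. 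To reach all scales one argues inductively: by \eqref{11} resonant sites are polynomially separated, so a box of size $N$ contains at most one, near some $n_0$; covering $\Lambda\setminus\{n_0\}$ by good boxes of the previous scale and performing a Schur-complement (Feshbach) reduction at $n_0$ closes the induction, the decay rate losing only a summable amount per scale, provided no \emph{double resonance} occurs --- no far-apart pair $n_0,n_1\in\Lambda$ deeply resonant at the same $E$. Medium-range pairs are excluded directly by \eqref{11} (two resonances at the same $E$ force $\|(n_0-n_1)\omega\|$ tiny); far pairs are excluded using the semialgebraic structure from Step 2: the set of $(\omega,\beta)$ producing such a configuration at scale $N$ is semialgebraic of degree $\lesssim N^{C}$, and, intersected with the relevant $\beta$'s, has measure $\lesssim e^{-N^{\sigma}}$, so Borel--Cantelli over $\omega\in DC$ gives a full-measure set of good $\omega$. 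For such $\omega$ and any polynomially bounded $\psi$ with $H\psi=E\psi$, the resolvent identity on a good box $\Lambda\ni n$, $0\notin\Lambda$, yields $|\psi_n|\le C|\Lambda|\max_{m\in\partial\Lambda}e^{-\gamma\,\mathrm{dist}(n,m)}|\psi_m|$; iterating along a telescoping sequence of good boxes gives $|\psi_n|\le e^{-\gamma'|n|}$ for $|n|$ large, so $H_\omega(x_0)$ has pure point spectrum with exponentially localized eigenfunctions.

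\emph{Main obstacle.} The crux is the multi-scale step: absorbing the long-range tails near a resonant site through the Schur-complement reduction without letting the decay rate $\gamma$ deteriorate along the induction, and carrying out the semialgebraic elimination of far double resonances as a measure estimate in $\omega$ restricted to $DC$. The Cayley transform is precisely what makes the latter available --- $\tan$ is transcendental, so the required polynomial complexity bounds exist only after passing to the circle variable $\beta$ (equivalently, to the polynomial-in-$E$ finite-volume determinants) --- and it also removes the technical friction of $H_\omega(x_0)$ being unbounded, since in the $\beta$-picture the energy ranges over a compact set.
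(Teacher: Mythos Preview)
Your route diverges from the paper's in two essential ways. First, the paper does \emph{not} run a multi-scale/Schur-complement induction. It factors $R_{[0,N)}(H(x)-E)R_{[0,N)}=D(x)^{-1}B_N(x)$ with $D(x)=\mathrm{diag}(\cos\pi(x+j\omega))$, so that $B_N(x)$ has bounded analytic entries in $x$; it then proves a large-deviation estimate (Theorem~\ref{t2.3}) for the subharmonic function $u(x)=\tfrac1N\log(|\det B_N(x)|+10^{-N})$, obtaining a lower bound on $|\det B_N|$ off a set of measure $e^{-\tilde c N^{\sigma}}$, while the minors of $B_N$ are bounded from above by a path expansion combined with Hadamard's inequality and a Denjoy--Koksma argument. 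This produces the Green's function estimate of Proposition~\ref{p3.1} \emph{at every scale $N$ directly}, with $\epsilon_0=\epsilon_0(\rho)$ and the Diophantine constants entering only through the exponent $\sigma$ in the measure bound --- which is exactly the non-perturbative feature. Second, the paper does not compactify the energy via a Cayley transform: it fixes $|E|\le C_0$, proves localization for such $E$ (the $C_0$-dependence sitting only in the constant $\tilde c$ of the measure bound, not in $\epsilon_0$), and then takes a countable union over $C_0\to\infty$ of the excluded $\omega$-sets.

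Your Cayley preprocessing is correct --- the scalar cocycle $f(\theta)=g_E(\theta)f(\theta+\omega)$ with telescoping modulus is the genuine Maryland exactness --- and the semialgebraic structure you extract from it is equivalent to what the paper gets by truncating $\sin,\cos$ in $B_N$. But you never actually use the cocycle downstream, and the heart of your argument has a real gap. You assert that the double-resonance set at scale $N$ has measure $\lesssim e^{-N^{\sigma}}$, yet nothing in your scheme manufactures such a bound: in a bare multi-scale/KAM iteration the bad set is only polynomially or stretched-exponentially small unless one feeds in a large-deviation input of precisely the type the paper proves in Section~2. Relatedly, the Schur-complement induction as sketched will in general deliver $\epsilon_0=\epsilon_0(\rho,c,A)$ rather than $\epsilon_0(\rho)$: the separation of resonant sites you invoke from \eqref{11} depends on $c,A$, and controlling the per-scale loss in $\gamma$ \emph{uniformly in those constants} is exactly what the subharmonic/large-deviation machinery buys. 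So either you must supply that input --- at which point you are essentially reproducing Proposition~\ref{p3.1} and the induction becomes superfluous --- or you obtain only a perturbative threshold, which is weaker than the theorem claims.
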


Our result is non-perturbative, since $\epsilon_{0}$ does not depend on $\omega$.
In the long range case here, the transfer matrix formalism is not applicable.
Our basic strategy is the same as that in \cite{B05}, but as mentioned above, the main difficulty is that
the potential $\tan$ is an unbounded function and the operator $H$ is unbounded.
In order to prove Anderson localization, we need Green's function estimates for
\begin{equation}\label{13}
G_{[0,N]}(x,E)=(R_{[0,N]}(H(x)-E)R_{[0,N]})^{-1},
\end{equation}
where $R_{\Lambda}$ is the restriction operator to $\Lambda\subset\mathbb{Z}$.
Write $\tan=\frac{\sin}{\cos}$, the singularity comes from $\frac{1}{\cos}$. Note that
\begin{equation}\label{14}
R_{[0,N]}(H(x)-E)R_{[0,N]}=A(x)B(x),
\end{equation}
where
\begin{equation}\label{15}
A(x)=\mathrm{diag}\left(\frac{1}{\cos\pi x},\ldots,\frac{1}{\cos\pi (x+N\omega)}\right).
\end{equation}
Hence
\begin{equation}\label{16}
G_{[0,N]}(x,E)=B(x)^{-1}A(x)^{-1}.
\end{equation}
In $A(x)^{-1}$, the singularity $\frac{1}{\cos}$ vanishes. This observation helps us to deal with the unbounded potential.

By Shnol's theorem \cite{H}, to establish Anderson localization for $H$, it suffices to show that if $\xi=(\xi_{n})_{n \in \mathbb{Z}}$ and $E\in\mathbb{R}$ satisfy
\begin{equation}\label{17}
|\xi_{n}|<C|n|,\quad n\rightarrow\infty,
\end{equation}
\begin{equation}\label{18}
H\xi=E\xi,
\end{equation}
then
\begin{equation}\label{19}
|\xi_{n}|<e^{-c|n|},\quad n\rightarrow\infty.
\end{equation}
Note that in our case, the operator $H$ is unbounded and the energy $E$ is unbounded.
To overcome this difficulty, we first establish Green's function estimates for energy $|E|\leq C_0$
and prove (\ref{19}) for energy $|E|\leq C_0$. Then we let $C_0\rightarrow\infty$ to obtain (\ref{19}) for all energy $E\in\mathbb{R}$.

We summarize the structure of this paper. We will prove a large deviation theorem for subharmonic functions in Section 2, which is needed for
Green's function estimates in Section 3. Then we recall some facts about semi-algebraic sets in Section 4 and give the proof of Anderson localization
in Section 5.

We will use the following notations. For positive numbers $a,b,a\lesssim b$ means $Ca\leq b$ for some constant $C>0$.
$a\ll b$ means $C$ is large. $a\sim b$ means $a\lesssim b$ and $b\lesssim a$. $N^{1-}$ means $N^{1-\epsilon}$ with some small $\epsilon>0$.
For $x\in\mathbb{R}$, $\| x\|=\inf\limits_{m\in\mathbb{Z}}|x-m|$.

\section{A large deviation theorem for subharmonic functions}

In this section, we will prove a large deviation theorem for subharmonic functions, which is needed for
Green's function estimates in Section 3.

\begin{lem}[Corollary 4.7 in \cite{B05}]\label{l2.1}
Assume $u=u(x)$ 1-periodic with subharmonic extension $\tilde{u}=\tilde{u}(z)$ to the strip $|\mathrm{Im} z|<1$
satisfying
\begin{equation}\label{a1}
|u|\leq 1, \quad|\tilde{u}|\leq B ,
\end{equation}
then
\begin{equation}\label{a2}
|\hat{u}(k)|\lesssim \frac{B}{|k|},\quad \forall k\in\mathbb{Z}\setminus\{0\}.
\end{equation}
\end{lem}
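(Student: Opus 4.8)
The statement is a classical Fourier decay estimate for subharmonic functions, attributed here to Bourgain. My plan is to use the Riesz representation of the subharmonic function and reduce the Fourier coefficient bound to a measure-theoretic estimate.

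\medskip

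\noindent\textbf{Proof proposal.} The plan is to use the Riesz representation theorem for subharmonic functions on the strip $|\operatorname{Im} z| < 1$. First I would invoke the Riesz decomposition: on a slightly smaller strip, say $|\operatorname{Im} z| < 1 - \delta$, one can write $\tilde u(z) = h(z) + \int \log|z - \zeta|\, d\mu(\zeta)$, where $h$ is harmonic and $\mu$ is a positive (Riesz) measure supported in the strip. The bound $|\tilde u| \le B$ controls the total mass of $\mu$ on compact subsets (via, e.g., Jensen's formula or by testing against a suitable test function), giving $\mu(\text{strip} \cap \{|\operatorname{Re} z| \le 1\}) \lesssim B$ after accounting for periodicity, and likewise gives a bound $\lesssim B$ on the harmonic part $h$ along the real line. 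Periodicity of $u$ lets me work with the $1$-periodized picture, i.e. replace $\mu$ by its translates under $\mathbb{Z}$ and integrate against the periodized kernel.

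\medskip

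Next I would compute the Fourier coefficient $\hat u(k) = \int_0^1 u(x) e^{-2\pi i k x}\, dx$ by splitting according to the decomposition. For the harmonic part, $\hat h(k)$ decays like $e^{-2\pi|k|(1-\delta)}$ or better since $h$ extends harmonically and is bounded on a strip — in fact one gets a bound far stronger than $B/|k|$ from the two-sided strip analyticity. The main term is the potential part: I need $\left|\int_0^1 \left(\int \log|x - \zeta|\, d\mu(\zeta)\right) e^{-2\pi i k x}\, dx\right|$. Here I would swap the order of integration (justified by Fubini since $\mu$ is finite and $\log$ is locally integrable) and use the elementary fact that the $k$-th Fourier coefficient of $x \mapsto \log|x - \zeta|$ on the circle is bounded by $C/|k|$ uniformly in $\zeta$ (this is the $k\neq 0$ Fourier coefficient of $\log|2\sin\pi x|$ type functions, which equals $-1/(2|k|)$ at $\zeta$ real, and decays even faster — with an extra $e^{-2\pi|k||\operatorname{Im}\zeta|}$ — when $\operatorname{Im}\zeta \neq 0$). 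Integrating this uniform $C/|k|$ bound against the total mass $\mu \lesssim B$ yields $|\hat u(k)| \lesssim B/|k|$.

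\medskip

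The main obstacle I anticipate is making the Riesz mass bound $\mu \lesssim B$ clean and quantitative on the correct strip, since $\tilde u$ is only assumed subharmonic (not, say, $\log$ of an analytic function) and the bound $|\tilde u| \le B$ must be converted into control on $\Delta \tilde u$ as a measure. The standard route is to pair $\mu$ with a fixed nonnegative bump function $\chi$ that is $1$ on $\{|\operatorname{Re} z|\le 1, |\operatorname{Im} z| \le 1/2\}$ and supported in $|\operatorname{Im} z| < 1$: then $\int \chi\, d\mu = \int \tilde u \, \Delta\chi \lesssim B \|\Delta\chi\|_{L^1} \lesssim B$. One must also be slightly careful that the harmonic part $h$ on the real axis is controlled by $B$ — this follows from the mean value property and the $\mu$-mass bound since $h = \tilde u - (\text{potential})$ and the potential is bounded below on the relevant set. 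Since this is precisely Corollary 4.7 of \cite{B05}, I would in practice cite it, but the sketch above is how one reconstructs it; the only real work is bookkeeping with the periodization and the strip width.
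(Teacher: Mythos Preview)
The paper does not prove this lemma at all: it is quoted verbatim as Corollary~4.7 of \cite{B05} and used as a black box. Your sketch via the Riesz representation --- bounding the Riesz mass by pairing $\Delta\tilde u$ with a cutoff, then using that the $k$-th Fourier coefficient of the (periodized) logarithmic kernel is $O(1/|k|)$ uniformly in the pole, with the harmonic remainder contributing exponentially small terms --- is precisely the argument Bourgain gives in \cite{B05}, so your proposal and the cited source agree. The only point to tighten is the control on the harmonic part $h$: the potential $\int\log|z-\zeta|\,d\mu(\zeta)$ is not bounded below where $\mu$ has mass, so one does not literally bound $h$ pointwise from $h=\tilde u-(\text{potential})$; instead one uses that the potential is in $L^1$ on the real line with $L^1$ norm $\lesssim B$, which already suffices for the Fourier coefficient bound.
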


\begin{lem}[Corollary 4.10 in \cite{B05}]\label{l2.2}
Assume $u$ in Lemma \ref{l2.1} satisfying
\begin{equation}\label{a3}
\mathrm{mes}\left[x\in\mathbb{T}\Big\lvert |u(x)-\hat{u}(0)|>\epsilon_0\right]<\epsilon_1 ,
\end{equation}
then
\begin{equation}\label{a4}
\mathrm{mes}\left[x\in\mathbb{T}\Big\lvert |u(x)-\hat{u}(0)|>\sqrt{\epsilon_0}\right]<e^{-c(\sqrt{\epsilon_0}+\sqrt{\frac{\epsilon_1B}{\epsilon_0}})^{-1}}.
\end{equation}
\end{lem}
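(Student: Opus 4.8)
The plan is to derive Lemma~\ref{l2.2} from Lemma~\ref{l2.1} by combining the Fourier decay of subharmonic functions with the rigidity of low-degree trigonometric polynomials, following \cite[Corollary~4.10]{B05}. Subtracting the constant $\hat u(0)$ we may assume $\hat u(0)=0$, so that $|u|\le 2$ on $\mathbb{T}$ and, from the hypothesis, $\int_{\mathbb{T}}|u|^2\lesssim \epsilon_0^2+\epsilon_1$. By Lemma~\ref{l2.1} we have $|\hat u(k)|\lesssim B/|k|$ for every $k\neq 0$; hence for any integer $K\ge 1$ the high-frequency tail $u^{>K}(x):=\sum_{|k|>K}\hat u(k)e^{2\pi ikx}$ satisfies $\|u^{>K}\|_{L^2(\mathbb{T})}^2=\sum_{|k|>K}|\hat u(k)|^2\lesssim B^2/K$. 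Writing $u=u^{\leq K}+u^{>K}$ with $u^{\leq K}$ a trigonometric polynomial of degree $K$, Chebyshev's inequality gives $\mathrm{mes}[\,|u^{>K}|>\epsilon_0\,]\lesssim B^2/(K\epsilon_0^2)$, and therefore
\[
\mathrm{mes}\bigl[\,|u^{\leq K}|>2\epsilon_0\,\bigr]\le \mathrm{mes}[\,|u|>\epsilon_0\,]+\mathrm{mes}[\,|u^{>K}|>\epsilon_0\,]<\epsilon_1+CB^2/(K\epsilon_0^2)=:\eta_K,
\]
while $\|u^{\leq K}\|_{L^2}\lesssim \sqrt{\epsilon_0^2+\epsilon_1}$.

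So $u^{\leq K}$ is a trigonometric polynomial of degree $K$ which is $\le 2\epsilon_0$ off a set of small measure $\eta_K$. The decisive step is a Tur\'an--Nazarov / Remez-type inequality for trigonometric polynomials — a quantitative unique-continuation statement to the effect that a degree-$K$ trigonometric polynomial which is $\le m$ on a set of measure $1-\eta$ with $K\eta$ small cannot exceed, say, $\sqrt m$ except on a set of measure exponentially small in $K$. Applying such an estimate to $u^{\leq K}$ (with $m=2\epsilon_0$) and adding back the contribution of $u^{>K}$ yields $\mathrm{mes}[\,|u|>C\sqrt{\epsilon_0}\,]\lesssim e^{-cK}+B^2/(K\epsilon_0^2)$ provided $K\eta_K\lesssim 1$. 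One then optimizes $K$ against the competing demands — taking $K$ large to push both error terms down, but keeping $K\eta_K=K\epsilon_1+CB^2/\epsilon_0^2$ bounded — and, after rescaling the level $C\sqrt{\epsilon_0}$ back to $\sqrt{\epsilon_0}$ by adjusting constants, the balance is arranged so as to produce exactly the exponent $(\sqrt{\epsilon_0}+\sqrt{\epsilon_1 B/\epsilon_0})^{-1}$.

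The hard part is obtaining the exponential gain: Bernstein's inequality and Chebyshev's inequality by themselves only yield polynomially small exceptional sets, so the exponential decay must come entirely from the arithmetic rigidity of low-degree trigonometric polynomials. An equivalent and perhaps more transparent route is through the Riesz representation $\tilde u=h+p$ of the subharmonic extension, with $h$ harmonic and $p\le 0$ the logarithmic potential of the Riesz measure $\mu=\tfrac{1}{2\pi}\Delta\tilde u$: the hypothesis that $u$ is within $\epsilon_0$ of its mean off a set of measure $\epsilon_1$ forces $\mu$ to have small total mass near the real axis, $\mu\lesssim t^2$ with $t=\sqrt{\epsilon_0}+\sqrt{\epsilon_1 B/\epsilon_0}$, and then Cartan's lemma confines $\{\,u-\hat u(0)<-\sqrt{\epsilon_0}\,\}$ to a union of discs of total radius $\lesssim t^2e^{-c\sqrt{\epsilon_0}/t^2}\le e^{-c/t}$. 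A further subtlety in this formulation is that subharmonicity controls only the downward excursions of $u$; the set $\{\,u-\hat u(0)>\sqrt{\epsilon_0}\,\}$ must be bounded separately, which is done by applying the same reasoning to the subharmonic function $(u-\hat u(0)-\epsilon_0)^{+}$, whose integral over $\mathbb{T}$ is $<\epsilon_1$ by hypothesis. Carefully tracking constants through either route completes the proof.
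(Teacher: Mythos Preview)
The paper does not give its own proof of this lemma; it is stated with a bare citation to \cite[Corollary~4.10]{B05}. So there is no ``paper's proof'' to compare against beyond the reference itself.

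As for your sketch: the first route via Tur\'an--Nazarov/Remez does not close as written. After splitting $u=u^{\le K}+u^{>K}$, your bound on the exceptional set is $e^{-cK}+B^2/(K\epsilon_0^2)$, and the second term is only polynomially small in $K$; no choice of $K$ makes the sum exponentially small in the parameter $t=\sqrt{\epsilon_0}+\sqrt{\epsilon_1 B/\epsilon_0}$. You seem to sense this (``the hard part is obtaining the exponential gain''), but the optimization you describe cannot repair it. The Remez inequality bounds $\|u^{\le K}\|_\infty$; it does not by itself confine the large set of $u^{>K}$ to something exponentially small.

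Your second route, via the Riesz decomposition and Cartan's lemma, is the right idea and is essentially what Bourgain does, but your sketch is too loose to be a proof: the step ``the hypothesis forces $\mu\lesssim t^2$'' is asserted without justification, and it is exactly here that the work lies. In \cite{B05} the argument passes through an intermediate splitting (Lemma~4.8 there): the Fourier decay of Lemma~\ref{l2.1} gives $u$ a bounded BMO-type norm, the hypothesis \eqref{a3} gives $\|u-\hat u(0)\|_{L^1}\lesssim \epsilon_0+\epsilon_1$, and these combine to produce $u-\hat u(0)=u_0+u_1$ with $\|u_0\|_\infty\lesssim\sqrt{\epsilon_0}$ and $\|u_1\|_{L^1}$ small; the exponential tail then comes from a Cartan/John--Nirenberg estimate applied to $u_1$. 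If you want to turn your second paragraph into a proof, you need to supply this splitting and the quantitative link between the $L^1$ smallness and the Riesz mass near $\mathbb{T}$.
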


Now we can prove the following large deviation theorem.
\begin{thm}\label{t2.3}
Assume $\omega\in\mathbb{T}$ satisfies a $DC$ (diophantine condition)
\begin{equation}\label{a5}
\|k\omega\|>c| k |^{-A},\quad \forall k\in\mathbb{Z}\setminus\{0\}.
\end{equation}
Let $u:\mathbb{T}\rightarrow\mathbb{R}$ be periodic with bounded subharmonic extension $\tilde{u}$ to $|\mathrm{Im} z|\leq 1$.
Then
\begin{equation}\label{a6}
\mathrm{mes}\left[x\in\mathbb{T}\Big\lvert\left|\sum_{0\leq |m|<M}\frac{M-|m|}{M^{2}} u(x+m\omega)-\hat{u}(0)\right|>M^{-\sigma}\right]<e^{-\tilde{c}M^\sigma},\quad \tilde{c}>0
\end{equation}
for some $\sigma=\sigma(A)>0$.
\end{thm}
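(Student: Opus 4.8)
The plan is to combine the Fourier-decay estimate of Lemma~\ref{l2.1}, the measure-concentration statement of Lemma~\ref{l2.2}, and a standard averaging-over-the-orbit argument exploiting the Diophantine condition~(\ref{a5}). First I would normalize: after subtracting $\hat u(0)$ and dividing by the bound $B$ on $\tilde u$, I may assume $\hat u(0)=0$, $|u|\leq 1$, $|\tilde u|\leq B$, so Lemma~\ref{l2.1} gives $|\hat u(k)|\lesssim B/|k|$ for $k\neq 0$. Next, observe that the weight $\frac{M-|m|}{M^2}$ in~(\ref{a6}) is precisely the Fej\'er kernel: writing $S_M(x)=\sum_{0\leq|m|<M}\frac{M-|m|}{M^2}u(x+m\omega)$ and expanding $u$ in its Fourier series, one gets $S_M(x)=\sum_{k\neq 0}\hat u(k)\,\widehat{K_M}(k\omega)\,e^{2\pi i kx}$ where $\widehat{K_M}(t)=\bigl(\frac{\sin\pi Mt}{M\sin\pi t}\bigr)^2$ is the Fej\'er kernel, so $|\widehat{K_M}(k\omega)|\lesssim M^{-2}\|k\omega\|^{-2}$.

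With this, I would estimate $\|S_M\|_{L^2}$ (or even $\|S_M\|_{L^\infty}$ after a little more care). Using $|\hat u(k)|\lesssim B/|k|$ and $|\widehat{K_M}(k\omega)|\lesssim \min(1, M^{-2}\|k\omega\|^{-2})$ together with the Diophantine bound $\|k\omega\|\gtrsim |k|^{-A}$, the tail sum $\sum_{|k|\geq M^{\tau}} |\hat u(k)|^2|\widehat{K_M}(k\omega)|^2$ is controlled by $B^2 M^{-4}\sum_{|k|\geq M^\tau}|k|^{-2}|k|^{4A}$, which is small provided $\tau$ is chosen with $\tau(4A-1)<4$ — say $\tau$ a small power — while the low-frequency part $1\leq|k|<M^\tau$ contributes $\sum |k|^{-2}\cdot M^{-4}\|k\omega\|^{-4}\lesssim M^{-4}\cdot M^{4A\tau}$, again a negative power of $M$ for suitable small $\tau$. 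Balancing these choices yields $\|S_M\|_{L^2}\lesssim B\,M^{-\sigma_0}$ for some $\sigma_0=\sigma_0(A)>0$; together with the trivial pointwise bound $\|S_M\|_{L^\infty}\lesssim 1$ (since $|u|\leq 1$ and the weights sum to $1$), Chebyshev gives $\mathrm{mes}[\,|S_M|>M^{-\sigma_1}] < M^{-\sigma_2}$ for appropriate exponents, i.e.\ the hypothesis~(\ref{a3}) of Lemma~\ref{l2.2} holds with $\epsilon_0\sim M^{-\sigma_1}$, $\epsilon_1\sim M^{-\sigma_2}$.

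Finally I would feed this into Lemma~\ref{l2.2}: note that $S_M$ is itself $1$-periodic with bounded subharmonic extension to $|\mathrm{Im}\,z|\leq 1$ (the orbit-average of shifts of the subharmonic $\tilde u$ is subharmonic, and the Fej\'er weights are nonnegative, so the bound $B$ on $\tilde u$ transfers), and $\widehat{S_M}(0)=0$. Applying~(\ref{a4}) with $\epsilon_0\sim M^{-\sigma_1}$ and $\epsilon_1 B/\epsilon_0\sim M^{-\sigma_2+\sigma_1}B$, the exponent $(\sqrt{\epsilon_0}+\sqrt{\epsilon_1 B/\epsilon_0})^{-1}$ is $\gtrsim M^{\sigma_1/2}$ once $\sigma_2>\sigma_1$, and $\sqrt{\epsilon_0}=M^{-\sigma_1/2}$, so one obtains $\mathrm{mes}[\,|S_M|>M^{-\sigma_1/2}] < e^{-cM^{\sigma_1/2}}$. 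Renaming $\sigma=\sigma_1/2$ (and tracking through the $B$-dependence, which only costs a fixed power of $M$ and can be absorbed) gives~(\ref{a6}). The main obstacle I anticipate is the bookkeeping in the second paragraph: choosing the cutoff $M^\tau$ and the exponents so that \emph{both} the Diophantine small-divisor contribution from low frequencies \emph{and} the Fourier-decay tail come out as genuine negative powers of $M$, with all exponents $\sigma$ depending only on $A$ (and the $B$-dependence entering only polynomially so it can be swallowed); everything else is a direct application of the two quoted lemmas.
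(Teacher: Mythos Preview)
Your approach is essentially identical to the paper's: form the Fej\'er average $v=S_M$, expand in Fourier series, use Lemma~\ref{l2.1} together with the Fej\'er-kernel bound and the Diophantine condition to show $\|v-\hat u(0)\|_{L^2}\lesssim M^{-\sigma_0}$, apply Chebyshev to verify~(\ref{a3}), and then invoke Lemma~\ref{l2.2} (the paper makes the explicit choices $K=M^{1/(4A)}$, $\epsilon_0=M^{-1/(25A)}$, $\epsilon_1=M^{-3/(25A)}$, arriving at $\sigma=1/(50A)$).

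There is, however, one genuine slip in the bookkeeping you yourself flagged as the delicate point. Your ``tail'' estimate $B^2 M^{-4}\sum_{|k|\geq M^\tau}|k|^{-2}|k|^{4A}$ is a \emph{divergent} sum (for any $A>3/4$), so the bound is vacuous as written. The fix --- and this is exactly what the paper does in~(\ref{a10})--(\ref{a12}) --- is to swap which bound you use on which range: on the \emph{tail} $|k|\geq K$ use the trivial estimate $|\widehat{K_M}(k\omega)|\leq 1$, giving $\sum_{|k|\geq K}B^2|k|^{-2}\lesssim B^2/K$; on the \emph{head} $0<|k|<K$ use the Diophantine bound, giving $M^{-4}\sum_{0<|k|<K}|k|^{4A-2}\lesssim K^{4A-1}/M^4$. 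Balancing these (e.g.\ $K=M^{1/(4A)}$) yields $\|v-\hat u(0)\|_2\lesssim M^{-1/(10A)}$, after which your application of Chebyshev and Lemma~\ref{l2.2} goes through exactly as you describe.
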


\begin{proof}
Let
\begin{equation}\label{a7}
v(x)=\sum_{0\leq |m|<M}\frac{M-|m|}{M^{2}} u(x+m\omega) ,
\end{equation}
then we have
\begin{equation}\label{a8}
\hat{v}(0)=\hat{u}(0),\quad v(x)-\hat{u}(0)=\sum_{k\neq 0}\hat{u}(k)\left(\sum_{0\leq |m|<M}\frac{M-|m|}{M^{2}}e^{2\pi imk\omega}\right)e^{2\pi ikx}.
\end{equation}

Since
\begin{equation}\label{a9}
\left|\sum_{0\leq |m|<M}\frac{M-|m|}{M^{2}}e^{2\pi imk\omega}\right|<\frac{1}{1+M^2\|k\omega\|^{2}},
\end{equation}
by Lemma \ref{l2.1},
\begin{equation}\label{a10}
\|v-\hat{u}(0)\|_{2}\lesssim \left[\sum_{k\neq 0}\frac{1}{|k|^{2}}\left(\frac{1}{1+M^2\|k\omega\|^{2}}\right)^{2}\right]^{\frac{1}{2}}.
\end{equation}

By (\ref{a5}),
\begin{equation}\label{a11}
\sum_{0<|k|<K}\frac{1}{|k|^{2}}\left(\frac{1}{1+M^2\|k\omega\|^{2}}\right)^{2}\lesssim\sum_{0<|k|<K}\frac{|k|^{4A-2}}{M^{4}}\lesssim \frac{K^{4A-1}}{M^{4}}.
\end{equation}

By (\ref{a10}), (\ref{a11}),
\begin{equation}\label{a12}
\|v-\hat{u}(0)\|_{2}\lesssim \left(\frac{1}{K}+\frac{K^{4A-1}}{M^{4}}\right)^{\frac{1}{2}}\leq M^{-\frac{1}{10A}},
\end{equation}
where we take $K=M^{\frac{1}{4A}}$.

By (\ref{a12}), if $\epsilon_{0}=M^{-\frac{1}{25A}}, \epsilon_{1}=M^{-\frac{3}{25A}}$, then
\begin{equation}\label{a13}
\mathrm{mes}\left[x\in\mathbb{T}\Big\lvert |v(x)-\hat{v}(0)|>\epsilon_0\right]<\epsilon_1 .
\end{equation}
By Lemma \ref{l2.2},
\begin{equation}\label{a14}
\mathrm{mes}\left[x\in\mathbb{T}\Big\lvert |v(x)-\hat{v}(0)|>M^{-\frac{1}{50A}}\right]<e^{-\tilde{c}M^{\frac{1}{50A}}} ,\quad\tilde{c}>0.
\end{equation}
This proves Theorem \ref{t2.3} if we take $\sigma=\frac{1}{50A}$.
\end{proof}

\begin{rem}\label{r2.4}
In the proof of Theorem \ref{t2.3}, we only need to assume
\begin{equation}\label{a15}
  \|k\omega\|>c|k|^{-A},\quad \forall 0<|k|\leq M.
\end{equation}
\end{rem}

\section{Green's function estimates}

In this section, we will prove Green's function estimates using the large deviation theorem in Section 2.
We will follow the method in \cite{B05}, but as mentioned in Section 1, the operator $H$ is unbounded and the energy $E$ is unbounded.
We will prove Green's function estimates for energy $|E|\leq C_0$.

\begin{prop}\label{p3.1}
Let
\begin{equation}\label{b1}
H(x)=\tan\pi(x+n\omega)\delta_{nn'}+\epsilon S_\phi.
\end{equation}
%where
%\begin{equation}\label{b2}
%x+n\omega-\frac{1}{2} \notin \mathbb{Z}, \quad\forall n \in \mathbb{Z}.
%\end{equation}
Assume $\phi$  real analytic satisfying
\begin{equation}\label{b3}
  |\hat{\phi}(n)|<e^{-\rho|n|},\quad \forall n \in \mathbb{Z}
\end{equation}
for some $\rho>0$. Then there is $\epsilon_{0}=\epsilon_{0}(\rho)>0$, such that if $0<\epsilon<\epsilon_{0}$,
the following holds:

Let $N$ be sufficiently large and
\begin{equation}\label{b4}
  \|k\omega\|>c|k|^{-A},\quad \forall 0<|k|\leq N.
\end{equation}
For energy $|E|\leq C_0$, there is $\Omega=\Omega_{N}(E)\subset\mathbb{T}$ satisfying
\begin{equation}\label{b5}
\mathrm{mes}\Omega<e^{-\tilde{c}N^{\sigma}}, \quad \sigma=\sigma(A)>0
\end{equation}
($\tilde{c}>0$ depends on $C_0$) such that if $x\notin \Omega$, then for some $|m|<\sqrt{N}$,
we have the Green's function estimate
\begin{equation}\label{b6}
|G_{[0,N)}(x+m\omega,E)(n,n')|<e^{-c_{0}(|n-n'|-\epsilon_{0}^{\frac{1}{40}}N)}, \quad n,n'\in [0,N)
\end{equation}
for some $c_{0}=c_{0}(\rho)>0$.
\end{prop}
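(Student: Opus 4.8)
The plan is to follow Bourgain's scheme from \cite{B05} for the bounded long-range model, with the essential modification coming from the factorization (\ref{14})--(\ref{16}), which isolates the singularity of $\tan$ into the diagonal factor $A(x)$ whose inverse is harmless. Concretely, I would write $R_{[0,N)}(H(x)-E)R_{[0,N)} = A(x) B(x)$ where $A(x) = \mathrm{diag}\bigl(\tfrac{1}{\cos\pi(x+n\omega)}\bigr)_{n\in[0,N)}$ and
\begin{equation*}
B(x)(n,n') = \sin\pi(x+n\omega)\,\delta_{nn'} - E\cos\pi(x+n\omega)\,\delta_{nn'} + \epsilon\,\cos\pi(x+n\omega)\,\hat\phi(n-n'),
\end{equation*}
so that $G_{[0,N)}(x,E) = B(x)^{-1} A(x)^{-1}$ and $A(x)^{-1}$ is the bounded diagonal matrix with entries $\cos\pi(x+n\omega)$. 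Thus it suffices to obtain off-diagonal exponential decay for $B(x)^{-1}$, since multiplying on the right by the diagonal $A(x)^{-1}$ only rescales columns by bounded factors and does not affect the exponential rate.

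The core of the argument is then a large-deviation estimate for $\log\|B(x)^{-1}\|$ together with an ``avalanche principle''/resolvent-expansion paragraphing argument. First I would treat $B(x)$ as a perturbation of the diagonal $B_0(x)(n,n') = (\sin\pi(x+n\omega) - E\cos\pi(x+n\omega))\delta_{nn'}$: for $x$ outside a set where some $|\sin\pi(x+n\omega) - E\cos\pi(x+n\omega)|$ is small, $B_0(x)^{-1}$ is bounded with norm $\lesssim$ (separation)$^{-1}$, and the Neumann series in $\epsilon S_\phi \cdot A(x)^{-1}$-type correction converges because $|\hat\phi(n-n')| < e^{-\rho|n-n'|}$ makes $\epsilon S_\phi$ a small exponentially-localized operator, giving the resolvent identity the self-improving off-diagonal decay $e^{-c_0|n-n'|}$ at rate $c_0 = c_0(\rho)$ on such ``good'' boxes. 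The set of bad $x$ where the diagonal entry $\sin\pi(x+n\omega) - E\cos\pi(x+n\omega)$ fails to be bounded below for too many $n$ is controlled by Theorem \ref{t2.3}: the function $u(x) = \log|\sin\pi x - E\cos\pi x|$ (equivalently $\log\bigl(\sqrt{1+E^2}\,|\cos\pi(x-\arctan(1/E))|\bigr)$ up to a phase) extends subharmonically to $|\mathrm{Im}\,z|\le 1$ with a bound $B = B(C_0)$ depending only on $C_0 \ge |E|$, so its ergodic averages along $x + m\omega$ concentrate, yielding $\mathrm{mes}\,\Omega < e^{-\tilde c N^\sigma}$ with $\tilde c$ depending on $C_0$. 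The role of the shift by $|m| < \sqrt N$ in (\ref{b6}) is the standard device: for $x \notin \Omega$ one cannot rule out resonances for the box based at $x$ itself, but among the $\sqrt N$ translates one box will be ``regular at both ends'', which is what the resolvent-gluing needs; the loss $\epsilon_0^{1/40} N$ in the exponent absorbs the finitely many scales where the Neumann series is only ``almost'' convergent.

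In more detail, the iteration is multi-scale: one starts at a scale $N_0 \sim (\log N)^{C}$ or $N_0 \sim N^{\sigma'}$ where the large-deviation bound of Theorem \ref{t2.3} already gives that all but an $e^{-\tilde c N_0^\sigma}$-measure set of base points have a good box, then one passes from scale $N_j$ to $N_{j+1} = N_j^{C}$ (or an additive doubling) by covering $[0,N_{j+1})$ with overlapping intervals of length $N_j$, using the resolvent identity $G_{[0,N_{j+1})} = \sum (\text{good-box } G_{N_j}) + (\text{good-box}) \,\Gamma\, G_{[0,N_{j+1})}$, where $\Gamma$ is the boundary coupling with norm $\lesssim \epsilon e^{-\rho N_j/2}$ because of (\ref{b3}); one checks that having only finitely many — in fact at most a $\sqrt N / N_j$ fraction — of bad sub-boxes, each still with a polynomially-bounded inverse from the Shnol-type a priori bound, is enough to run the telescoping and keep the decay rate $c_0$ essentially unchanged while the exceptional measure stays summably small. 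The main obstacle, and the place where this proof genuinely differs from \cite{B05}, is making the subharmonic/large-deviation input uniform enough in $E$: since $E$ ranges over $|E| \le C_0$ with $C_0$ eventually sent to infinity, one must track how $B = B(C_0)$ (hence $\tilde c = \tilde c(C_0)$ and the threshold $\epsilon_0$) degrades, and verify that for each fixed finite $C_0$ the constants are legitimate — this is exactly why the proposition is stated for $|E| \le C_0$ and the limit $C_0 \to \infty$ is deferred to Section 5. A secondary technical point is that $B(x)$ is not self-adjoint (it is $A(x)^{-1}$ times a self-adjoint operator), so one should argue with $B(x)^{-1} = A(x) G_{[0,N)}(x,E)$ and the a priori polynomial bound on $G$ coming from $\det$-estimates, rather than invoking spectral theorem bounds; but the exponential-decay mechanism via the Neumann series is unaffected by non-self-adjointness.
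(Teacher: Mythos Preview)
Your approach diverges substantially from the paper's. The paper does not run a multi-scale resolvent iteration at all; it works directly with Cramer's rule $|G_{[0,N)}(x,E)(n,n')|=|\det A_{n,n'}(x)|/|\det(H_N(x)-E)|$, after multiplying numerator and denominator by $\prod_j\cos\pi(x+j\omega)$ to pass to the analytic matrix $B_N(x)$. For the denominator, Theorem~\ref{t2.3} is applied to the single subharmonic function $u(x)=\tfrac1N\log(|\det B_N(x)|+10^{-N})$; the mean $\hat u(0)$ is bounded below by Jensen's inequality at $z=0$, which gives $\hat u(0)>\tfrac12\log(1+E^2)-\log 2+\log(1-\epsilon_0\|\hat\phi\|_1)$. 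For the numerator, $\det B_{n,n'}(x)$ is expanded as a sum over paths $\gamma$ from $n$ to $n'$, Hadamard's inequality bounds each complementary principal minor, and the resulting sums $\sum_k\log[|\cos\pi(x+k\omega+\alpha_E)|+\eta]$ are controlled \emph{uniformly in $x$} by a Denjoy--Koksma inequality together with a careful combinatorial estimate splitting into $s\le\epsilon_0^{1/10}N$ and $s>\epsilon_0^{1/10}N$. Dividing gives (\ref{b6}) in a single step; the shift $|m|<\sqrt N$ appears only because Theorem~\ref{t2.3} controls a Fej\'er average of $u(x+m\omega)$ over $|m|<M=\sqrt N$, so at least one $m$ must realize the lower bound.

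Your multi-scale/Neumann-series program is a conceivable alternative, but as sketched it has genuine gaps. First, your candidate subharmonic function $u(x)=\log|\sin\pi x-E\cos\pi x|$ is unbounded below and fails the hypotheses of Lemma~\ref{l2.1} and Theorem~\ref{t2.3} without regularization; the paper avoids this by working with the full normalized log-determinant plus $10^{-N}$. Second, the Neumann series for $B(x)^{-1}$ around the diagonal cannot converge at scale $N$: generically $\min_{n<N}|\sin\pi(x+n\omega)-E\cos\pi(x+n\omega)|\sim N^{-1}$, so $\|B_0^{-1}\|\sim N\gg\epsilon^{-1}$. You defer this to multi-scale, but then bad sub-boxes need an a priori polynomial bound on $\|G_{I_\alpha}\|$ that you attribute to ``Shnol''; no such bound exists---Shnol's theorem constrains generalized eigenfunctions, not box Green's functions, and for the unbounded Maryland potential there is no evident substitute. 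The paper's determinant method is designed precisely to bypass this: it never inverts on a bad sub-box, and the only place a translate is selected is via the Fej\'er average, not via resolvent gluing.
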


\begin{proof}
By Cramer's rule,
\begin{equation}\label{b7}
|G_{[0,N)}(x,E)(n,n')|=\frac{|\det A_{n,n'}(x)|}{|\det[H_{N}(x)-E]|}, \quad n,n'\in [0,N)
\end{equation}
where $A_{n,n'}(x)$ refers to the ($n,n'$)-minor of $H_{N}(x)-E$.

Let
\begin{equation}\label{b8}
B_N(x)(n,n')=[\cos\pi(x+n\omega)][H_{N}(x)-E](n,n'),\quad n,n'\in [0,N),
\end{equation}
we have
\begin{equation}\label{b9}
|\det[H_{N}(x)-E]|=\left|\prod_{j=0}^{N-1}\cos\pi(x+j\omega)\right|^{-1}|\det B_N(x)|.
\end{equation}
We need to establish a lower bound for $|\det B_N(x)|$.

Since
\begin{equation}\label{b10}
B_N(x)(n,n)=\sin\pi(x+n\omega)+(\epsilon\hat{\phi}(0)-E)\cos\pi(x+n\omega),
\end{equation}
\begin{equation}\label{b11}
B_N(x)(n,n')=\epsilon\hat{\phi}(n-n')\cos\pi(x+n\omega),\quad n\neq n',
\end{equation}
the function
\begin{equation}\label{b12}
 u(x)=\frac{1}{N} \log(|\det B_N(x)|+10^{-N})
\end{equation}
admits a subharmonic extension to the complex plane, $\tilde{u}(z)$, satisfying
\begin{equation}\label{b13}
-\log 10\leq\tilde{u}(z)\leq \log\left|(\|\hat{\phi}\|_{1}+C)e^{\pi|\mathrm{Im} z|}\right|.
\end{equation}
Hence,
\begin{equation}\label{b14}
\hat{u}(0)>\int_{0}^{1}\frac{1}{N} \log|\det B_N(x)|dx=\frac{1}{N}\int_{|z|=1}\log|\det B_N(z)|, \quad z=e^{2\pi ix}.
\end{equation}
By (\ref{b10}), (\ref{b11}),
\begin{equation}\label{b15}
|\det B_N(z)|=|\det B_1(z)|,
\end{equation}
where
\begin{equation}\label{b16}
B_1(z)(n,n)=\frac{1}{2i}(e^{2\pi in\omega}z-1)+\frac{1}{2}(\epsilon\hat{\phi}(0)-E)(e^{2\pi in\omega}z+1),
\end{equation}
\begin{equation}\label{b17}
B_1(z)(n,n')=\frac{1}{2}\epsilon\hat{\phi}(n-n')(e^{2\pi in\omega}z+1),\quad n\neq n'.
\end{equation}
Since $\log|\det B_1(z)|$ is subharmonic, by Jensen inequality,
\begin{equation}\label{b18}
\int_{|z|=1}\log|\det B_1(z)|\geq\log|\det B_1(0)|.
\end{equation}
By (\ref{b16}), (\ref{b17}),
\begin{equation}\label{b19}
|\det B_1(0)|=\left(\frac{|E-i|}{2}\right)^{N}|\det [I-B_2]|,
\end{equation}
where
\begin{equation}\label{b20}
B_2(n,n')=\frac{\epsilon\hat{\phi}(n-n')}{E-i} .
\end{equation}
Since $\|B_2\|\leq \epsilon_{0}\|\hat{\phi}\|_{1}<1$, we have
\begin{equation}\label{b21}
|\det[I-B_2]^{-1}|\leq\|[I-B_2]^{-1}\|^{N}\leq (1-\epsilon_{0}\|\hat{\phi}\|_{1})^{-N}.
\end{equation}
By (\ref{b14}), (\ref{b15}), (\ref{b18}), (\ref{b19}), (\ref{b21}),
\begin{equation}\label{b22}
\hat{u}(0)>\frac{1}{2}\log (1+E^2)-\log 2 +\log(1-\epsilon_{0}\|\hat{\phi}\|_{1}).
\end{equation}

Let
\begin{equation}\label{b23}
v(x)=\sum_{0\leq |m|<M}\frac{M-|m|}{M^{2}} u(x+m\omega) ,\quad M=\sqrt{N},
\end{equation}
by Theorem \ref{t2.3} and Remark \ref{r2.4},
\begin{equation}\label{b24}
\mathrm{mes}\left[x\in\mathbb{T}\Big\lvert|v(x)-\hat{u}(0)|>N^{-\sigma}\right]<e^{-\tilde{c}N^\sigma},\quad \tilde{c},\sigma>0.
\end{equation}
Thus outside a set $\Omega=\Omega_{N}(E), \mathrm{mes}\Omega<e^{-\tilde{c}N^\sigma}$, using (\ref{b22}), we have
\begin{equation}\label{b25}
v(x)\geq\hat{u}(0)-N^{-\sigma}>\frac{1}{2}\log (1+E^2)-\log 2 +\log(1-\epsilon_{0}\|\hat{\phi}\|_{1})-N^{-\sigma}.
\end{equation}
So, for $x\notin \Omega$, there is $|m|<\sqrt{N}$, such that
\begin{equation}\label{b26}
|\det B_N(x+m\omega)|>e^{\frac{1}{2}N\log (1+E^2)-N\log 2 +N\log(1-\epsilon_{0}\|\hat{\phi}\|_{1})-N^{1-\sigma}}.
\end{equation}

Let $B_{n,n'}(x)$ be the ($n,n'$)-minor of $B_{N}(x)$, by (\ref{b8}),
\begin{equation}\label{b27}
|\det A_{n,n'}(x)|=|\cos\pi(x+n\omega)|\left|\prod_{j=0}^{N-1}\cos\pi(x+j\omega)\right|^{-1}|\det B_{n,n'}(x)|.
\end{equation}
We will obtain an upper bound on $|\det B_{n,n'}(x)|$ uniformly in $x$.

We express $\det B_{n,n'}(x)$ as a sum over paths $\gamma$ as
\begin{equation}\label{b28}
\sum_{s}\sum_{|\gamma|=s}\pm\left(\det[R_{[0,N)\backslash\gamma}B_{N}(x)R_{[0,N)\backslash\gamma}]\right)\epsilon^{s-1}
\prod_{i=1}^{s-1}\left[\hat{\phi}(\gamma_{i+1}-\gamma_{i})\cos\pi(x+\gamma_{i+1}\omega)\right],
\end{equation}
where $\gamma=(\gamma_{1},\ldots,\gamma_{s})$ is a sequence in $[0,N)$ with $\gamma_{1}=n,\gamma_{s}=n'$.

Hence
\begin{equation}\label{b29}
|\det B_{n,n'}(x)|<\sum_{s}\sum_{|\gamma|=s}\epsilon^{s-1}e^{-\rho\sum\limits_{i=1}^{s-1}|\gamma_{i+1}-\gamma_{i}|}
|\det[R_{[0,N)\backslash\gamma}B_{N}(x)R_{[0,N)\backslash\gamma}]|.
\end{equation}

If we denote $b=\sum\limits_{i=1}^{s-1}|\gamma_{i+1}-\gamma_{i}|\geq|n-n'|$ and use the fact that there are at most $2^{s-1}\binom{b}{s-1} (s,b)$-paths, then
\begin{equation}\label{b30}
|\det B_{n,n'}(x)|<\sum_{b\geq|n-n'|}\sum_{s\leq b+1}2^{s-1}\binom{b}{s-1}\epsilon^{s-1}e^{-\rho b}\max_{|\gamma|=s}|\det[R_{[0,N)\backslash\gamma}B_{N}(x)R_{[0,N)\backslash\gamma}]|.
\end{equation}

Using Hadamard inequality, we have
\begin{equation}\label{b31}
|\det[R_{[0,N)\backslash\gamma}B_{N}(x)R_{[0,N)\backslash\gamma}]|\leq\prod_{k\in[0,N)\backslash\gamma}\left[|\sin\pi(x+k\omega)-E\cos\pi(x+k\omega)|+\epsilon_{0}(\|\hat{\phi}\|_{1}+1)\right].
\end{equation}

So,
\begin{equation}\label{b32}
\log|\det[R_{[0,N)\backslash\gamma}B_{N}(x)R_{[0,N)\backslash\gamma}]|\leq\sum_{k\in[0,N)\backslash\gamma}\log[|\sin\pi(x+k\omega)-E\cos\pi(x+k\omega)|+\epsilon_{0}(\|\hat{\phi}\|_{1}+1)].
\end{equation}

Let $\alpha_{E}\in(0,1)$ such that $\sin\pi\alpha_{E}=\frac{1}{\sqrt{E^{2}+1}}, \ \cos\pi\alpha_{E}=\frac{E}{\sqrt{E^{2}+1}}$,
then by (\ref{b32}),
\begin{equation}\label{b33}
\log|\det[R_{[0,N)\backslash\gamma}B_{N}(x)R_{[0,N)\backslash\gamma}]|\leq \frac{N}{2}\log(E^{2}+1)+\sum_{k\in[0,N)\backslash\gamma}\log[|\cos\pi(x+k\omega+\alpha_{E})|+\epsilon_{0}(\|\hat{\phi}\|_{1}+1)].
\end{equation}

Let
\begin{equation}\label{b34}
S_{1}=\sum_{k\in[0,N)}\log[|\cos\pi(x+k\omega+\alpha_{E})|+\epsilon_{0}(\|\hat{\phi}\|_{1}+1)],
\end{equation}
\begin{equation*}
S_{2}=\sum_{k\in\gamma}\log[|\cos\pi(x+k\omega+\alpha_{E})|+\epsilon_{0}(\|\hat{\phi}\|_{1}+1)],\quad |\gamma|=s,
\end{equation*}
by (\ref{b33}),
\begin{equation}\label{b35}
\log|\det[R_{[0,N)\backslash\gamma}B_{N}(x)R_{[0,N)\backslash\gamma}]|\leq\frac{1}{2}N\log(E^{2}+1)+S_1-S_2.
\end{equation}

By Denjoy-Koksma type inequality (Lemma 12 in \cite{J}),
\begin{equation}\label{b36}
S_{1}\leq N \int_{0}^{1}\log[|\cos\pi x|+\epsilon_{0}(\|\hat{\phi}\|_{1}+1)]dx+N^{1-\delta},
\end{equation}
where $\delta=\delta(A)>0$.
Using $\int_{0}^{1}\log|\cos\pi x|dx=-\log2$, we have
\begin{equation}\label{b37}
\int_{0}^{1}\log[|\cos\pi x|+\epsilon_{0}(\|\hat{\phi}\|_{1}+1)]dx=-\log2+\int_{0}^{1}\log\left[1+\frac{\epsilon_{0}(\|\hat{\phi}\|_{1}+1)}{|\cos\pi x|}\right]dx.
\end{equation}

There is $C>1$, such that
\begin{equation}\label{b38}
\log(1+x)<x^{\frac{1}{2}}, \quad \forall x>C.
\end{equation}
Let
\begin{equation}\label{b39}
\eta=\epsilon_{0}(\|\hat{\phi}\|_{1}+1)<1,\quad J=\left\{x\in[0,1]\Big\lvert\frac{\eta}{|\cos\pi x|}>C\right\},
\end{equation}
then
\begin{equation}\label{b40}
J=\bigcup_{n\geq 0}J_{n},\quad J_{n}=\left\{x\in[0,1]\Big\lvert2^{n}C<\frac{\eta}{|\cos\pi x|}\leq 2^{n+1}C\right\}.
\end{equation}

Using (\ref{b38}) and the fact
\begin{equation}\label{b41}
{\rm mes}\left[x\in[0,1]\Big\lvert|\cos\pi x|<\epsilon\right]<\epsilon,\quad \forall 0<\epsilon<1,
\end{equation}
we have
\begin{equation}\label{b42}
\int_{J}\log\left(1+\frac{\eta}{|\cos\pi x|}\right)dx=\sum_{n\geq 0}\int_{J_{n}}\log\left(1+\frac{\eta}{|\cos\pi x|}\right)dx
\leq\sum_{n\geq 0}\frac{\eta}{2^{n}C}\left(2^{n+1}C\right)^{\frac{1}{2}}\leq C\eta,
\end{equation}
where $C$ refers to various constants.

Let
\begin{equation}\label{b43}
J_{-1}=\left\{x\in[0,1]\Big\lvert\eta^{\frac{1}{2}}<\frac{\eta}{|\cos\pi x|}\leq C\right\}, \quad
J_{-2}=\left\{x\in[0,1]\Big\lvert\frac{\eta}{|\cos\pi x|}\leq \eta^{\frac{1}{2}}\right\},
\end{equation}
by (\ref{b42}),
\begin{equation}\label{b44}
\int_{0}^{1}\log\left[1+\frac{\epsilon_{0}(\|\hat{\phi}\|_{1}+1)}{|\cos\pi x|}\right]dx
=\int_{J_{-2}}+\int_{J_{-1}}+\int_{J}\log\left(1+\frac{\eta}{|\cos\pi x|}\right)dx\leq C\eta^{\frac{1}{2}}<\epsilon_{0}^{\frac{1}{2}-}.
\end{equation}

Using (\ref{b36}), (\ref{b37}), (\ref{b44}), we get
\begin{equation}\label{b45}
S_{1}\leq -N\log 2+\epsilon_{0}^{\frac{1}{2}-}N .
\end{equation}

There is always the lower bound
\begin{equation}\label{b46}
S_{2}\geq s\log\epsilon_{0}.
\end{equation}

Assume $s>\epsilon_{0}^{\frac{1}{10}}N$, if $\kappa\sim \frac{s}{N}$, then by Denjoy-Koksma type inequality (Lemma 12 in \cite{J}),
\begin{equation}\label{b47}
\#\left\{k=0,\ldots,N-1\Big\lvert\|x+k\omega+\alpha_{E}-\frac{1}{2}\|<\kappa\right\}<10\kappa N.
\end{equation}
It follows that for at least $\frac{s}{2}$ elements $k\in\gamma$,
\begin{equation}\label{b48}
\log[|\cos\pi(x+k\omega+\alpha_{E})|+\epsilon_{0}(\|\hat{\phi}\|_{1}+1)]>\log\kappa^{2}>\log\epsilon_{0}^{\frac{1}{4}}.
\end{equation}
By (\ref{b48}),
\begin{equation}\label{b49}
S_{2}\geq \frac{1}{2}s\log\epsilon_{0}+\frac{1}{2}s\log\epsilon_{0}^{\frac{1}{4}}>\frac{3}{4}s\log\epsilon_{0}.
\end{equation}

By (\ref{b35}), (\ref{b45}), (\ref{b46}),
\begin{equation}\label{b50}
\log|\det[R_{[0,N)\backslash\gamma}B_{N}(x)R_{[0,N)\backslash\gamma}]|\leq\frac{1}{2}N\log(E^{2}+1)-N\log 2+\epsilon_{0}^{\frac{1}{2}-}N-s\log\epsilon_{0},
\end{equation}
and, if $s>\epsilon_{0}^{\frac{1}{10}}N$, by (\ref{b49}),
\begin{equation}\label{b51}
\log|\det[R_{[0,N)\backslash\gamma}B_{N}(x)R_{[0,N)\backslash\gamma}]|\leq\frac{1}{2}N\log(E^{2}+1)-N\log 2+\epsilon_{0}^{\frac{1}{2}-}N-\frac{3}{4}s\log\epsilon_{0}.
\end{equation}
By (\ref{b30}),
\begin{equation}\label{b52}
|\det B_{n,n'}(x)|<\sum_{b\geq|n-n'|}\sum_{s\leq b+1, s\leq\epsilon_{0}^{\frac{1}{10}}N}2^{s-1}\binom{b}{s-1}\epsilon^{s-1}
e^{-\rho b}(\frac{1}{\epsilon_{0}})^{s}e^{\frac{1}{2}N\log(E^{2}+1)-N\log 2+\epsilon_{0}^{\frac{1}{2}-}N}
\end{equation}
\begin{equation*}
+\sum_{b\geq|n-n'|}\sum_{s\leq b+1, s>\epsilon_{0}^{\frac{1}{10}}N}2^{s-1}\binom{b}{s-1}\epsilon^{s-1}
e^{-\rho b}(\frac{1}{\epsilon_{0}})^{\frac{3}{4}s}e^{\frac{1}{2}N\log(E^{2}+1)-N\log 2+\epsilon_{0}^{\frac{1}{2}-}N}.
\end{equation*}

We need to estimate
\begin{equation}\label{b53}
s_{1}=\sum_{b\geq|n-n'|}\sum_{s\leq b+1, s\leq\epsilon_{0}^{\frac{1}{10}}N}2^{s-1}\binom{b}{s-1}\epsilon^{s-1}e^{-\rho b}(\frac{1}{\epsilon_{0}})^{s},
\end{equation}
\begin{equation*}
s_{2}=\sum_{b\geq|n-n'|}\sum_{s\leq b+1, s>\epsilon_{0}^{\frac{1}{10}}N}2^{s-1}\binom{b}{s-1}\epsilon^{s-1}
e^{-\rho b}(\frac{1}{\epsilon_{0}})^{\frac{3}{4}s}.
\end{equation*}

If $|n-n'|\geq\epsilon_{0}^{\frac{1}{20}}N$, then
\begin{equation}\label{b54}
s_{1}\leq\sum_{b\geq|n-n'|}\sum_{ s\leq\epsilon_{0}^{\frac{1}{10}}N}\frac{1}{\epsilon_{0}}2^{s-1}\binom{b}{s-1}e^{-\rho b}
\leq\sum_{b\geq|n-n'|}\frac{1}{\epsilon_{0}}2^{\epsilon_{0}^{\frac{1}{10}}N}\binom{b}{\epsilon_{0}^{\frac{1}{10}}N}e^{-\rho b}.
\end{equation}
By Stierling formula,
\begin{equation}\label{b55}
\binom{b}{\epsilon_{0}^{\frac{1}{10}}N}\leq\binom{b}{\epsilon_{0}^{\frac{1}{20}}b}\leq\frac{C}{\epsilon_{0}^{\frac{1}{20}}\sqrt{N}}e^{bf(\epsilon_{0}^{\frac{1}{20}})},
\end{equation}
where
\begin{equation}\label{b56}
f(x)=-(1-x)\log(1-x)-x\log x, \quad 0<x<1.
\end{equation}
By (\ref{b54}), (\ref{b55}), (\ref{b56}),
\begin{equation}\label{b57}
s_{1}\leq\frac{1}{\epsilon_{0}}2^{\epsilon_{0}^{\frac{1}{10}}N}\frac{C}{\epsilon_{0}^{\frac{1}{20}}\sqrt{N}}\sum_{b\geq|n-n'|}e^{-\rho b+bf(\epsilon_{0}^{\frac{1}{20}})}
\leq e^{-[\rho-f(\epsilon_{0}^{\frac{1}{20}})-\epsilon_{0}^{\frac{1}{20}}\log2]|n-n'|}<e^{-\frac{\rho}{2}|n-n'|},
\end{equation}
if we take $\epsilon_0=\epsilon_0(\rho)>0$ small.

If $|n-n'|<\epsilon_{0}^{\frac{1}{20}}N$, then
\begin{equation}\label{b58}
\sum_{|n-n'|\leq b<\epsilon_{0}^{\frac{1}{20}}N}\sum_{s\leq b+1}2^{s-1}\binom{b}{s-1}\epsilon^{s-1}\frac{e^{-\rho b}}{\epsilon_{0}^{s}}
\leq\sum_{|n-n'|\leq b<\epsilon_{0}^{\frac{1}{20}}N}\frac{e^{-\rho b}}{\epsilon_{0}}(1+\frac{2\epsilon}{\epsilon_{0}})^{b}
\leq\frac{3^{\epsilon_{0}^{\frac{1}{20}}N}}{\epsilon_{0}}e^{-\rho|n-n'|}.
\end{equation}

Hence
\begin{equation}\label{b59}
s_{1}\leq e^{2\epsilon_{0}^{\frac{1}{20}}N-\rho|n-n'|}+e^{-\frac{\rho}{2}|n-n'|}.
\end{equation}

If $|n-n'|\geq\epsilon_{0}^{\frac{1}{20}}N$, then
\begin{equation}\label{b60}
s_{2}\leq \epsilon_{0}^{-\frac{3}{4}}\sum_{b\geq|n-n'|}e^{-\rho b}(1+2\epsilon_{0}^{\frac{1}{4}})^{b}\leq e^{-\frac{\rho}{2}|n-n'|}.
\end{equation}

If $|n-n'|<\epsilon_{0}^{\frac{1}{20}}N$, then
\begin{equation}\label{b61}
\sum_{|n-n'|\leq b<\epsilon_{0}^{\frac{1}{20}}N}\sum_{s\leq b+1}2^{s-1}\binom{b}{s-1}\epsilon^{s-1}\frac{e^{-\rho b}}{\epsilon_{0}^{\frac{3}{4}s}}
\leq\sum_{|n-n'|\leq b<\epsilon_{0}^{\frac{1}{20}}N}\frac{e^{-\rho b}}{\epsilon_{0}^{\frac{3}{4}}}(1+2\epsilon_{0}^{\frac{1}{4}})^{b}
\leq e^{\epsilon_{0}^{\frac{1}{20}}N-\rho|n-n'|}.
\end{equation}

Hence
\begin{equation}\label{b62}
s_{2}\leq e^{\epsilon_{0}^{\frac{1}{20}}N-\rho|n-n'|}+e^{-\frac{\rho}{2}|n-n'|}.
\end{equation}

By (\ref{b52}), (\ref{b59}), (\ref{b62}),
\begin{equation}\label{b63}
|\det B_{n,n'}(x)|<e^{\frac{1}{2}N\log(E^{2}+1)-N\log 2+\epsilon_{0}^{\frac{1}{2}-}N}(e^{2\epsilon_{0}^{\frac{1}{20}}N-\rho|n-n'|}+e^{-\frac{\rho}{2}|n-n'|}).
\end{equation}

Using (\ref{b7}), (\ref{b9}), (\ref{b26}), (\ref{b27}), (\ref{b63}),
we have for $x\notin \Omega$, there is $|m|<\sqrt{N}$, such that
\begin{equation}\label{b64}
|G_{[0,N)}(x+m\omega,E)(n,n')|<e^{N^{1-\sigma}-N\log(1-\epsilon_{0}\|\hat{\phi}\|_{1})+\epsilon_{0}^{\frac{1}{2}-}N+\epsilon_{0}^{\frac{1}{20}}N-\frac{\rho}{2}|n-n'|}
<e^{-c_{0}(|n-n'|-\epsilon_{0}^{\frac{1}{40}}N)},
\end{equation}
if we take $c_{0}=\frac{\rho}{2}$. This proves the Green's function estimate.
\end{proof}

\section{Semi-algebraic sets}

We recall some basic facts of semi-algebraic sets in this section, which is needed in Section 5. Let $\mathcal{P}=\{P_1,\ldots,P_s\}\subset\mathbb{R}[X_1,\ldots,X_n]$
be a family of real polynomials whose degrees are bounded by $d$.
A semi-algebraic set is given by
\begin{equation}\label{ss}
S=\bigcup_{j}\bigcap_{l\in L_{j}}\left\{\mathbb{R}^{n}\Big\lvert P_ls_{jl}0\right\},
\end{equation}
where $L_{j}\subset\{1,\ldots,s\},s_{jl}\in\{\leq,\geq,=\}$ are arbitrary.
We say that $S$ has degree at most $sd$ and its degree is the $\inf$ of $sd$ over all representations as in (\ref{ss}).

We need the following quantitative version of the Tarski-Seidenberg principle.
\begin{prop}[\cite{BPR}]\label{p4.1}
Let $S\subset\mathbb{R}^{n}$ be a semi-algebraic set of degree $B$, then any projection of $S$ is semi-algebraic of degree at most $B^{C}, C=C(n)$.
\end{prop}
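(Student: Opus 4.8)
The plan is to reduce everything to the elimination of a single existential quantifier and then iterate. Any projection $\pi\colon\mathbb{R}^n\to\mathbb{R}^{n'}$ is a composition of at most $n$ coordinate projections of the form $\mathbb{R}^m\to\mathbb{R}^{m-1}$ (forgetting the last coordinate), and $n'$ is a fixed quantity, so it suffices to prove: if $S\subset\mathbb{R}^m$ has degree $B$ then its image under $\mathbb{R}^m\to\mathbb{R}^{m-1}$ is semi-algebraic of degree at most $B^{C}$ for an absolute constant $C$. Applying this bound at most $n$ times compounds the exponent into $B^{C^{\,n}}=B^{C(n)}$, which is the claim.

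For the single step, write $S=\{x\in\mathbb{R}^m:\Phi(x)\}$, where $\Phi$ is a Boolean combination of atoms $P_i(x)\,\sigma_i\,0$ with $\sigma_i\in\{<,=,>\}$, $\mathcal P=\{P_1,\dots,P_s\}\subset\mathbb{R}[X_1,\dots,X_m]$, $\deg P_i\le d$, and $B\sim sd$. For a fixed $y=(x_1,\dots,x_{m-1})$, the truth value of $\exists x_m\,\Phi(y,x_m)$ depends only on the ordered sequence of sign vectors that $\mathcal P$ realizes along the cells cut out in the $x_m$-line by the real roots of the univariate polynomials $P_i(y,\cdot)$. The key point is that this sign sequence — the number of roots of each $P_i(y,\cdot)$, their interleaving, and the signs of the other $P_j$ on each cell — is itself determined by the signs at $y$ of an enlarged but still controlled family $\mathrm{Der}(\mathcal P)$: the leading $x_m$-coefficients of the $P_i$, their $x_m$-derivatives, and the principal subresultant coefficients $\mathrm{sRes}_j(P_i,\partial_{x_m}P_i)$ and $\mathrm{sRes}_j(P_i,P_k)$. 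There are $O(s^2 d)$ such polynomials, each of degree $\le C d^2$ in $y$, so $\exists x_m\,\Phi$ is equivalent to a Boolean combination of sign conditions on $\mathrm{Der}(\mathcal P)$, and the projected set has degree $\lesssim (s^2 d)(d^2)\le (sd)^{C}=B^{C}$.

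The real work, and the step I expect to be the main obstacle, is justifying the assertion that the real-root structure along a fiber is faithfully recorded by sign conditions on $\mathrm{Der}(\mathcal P)$ evaluated at the parameter. This is a parametrized sign-determination statement and rests on the good specialization behaviour of the signed subresultant (Sturm--Habicht) sequences — that specializing $y$ commutes, in the appropriate sense, with subresultant computation — together with a Thom-encoding argument to pin down each real root and the sign of every $P_j$ at it; one must handle degeneracies (vanishing leading coefficients, common factors, multiple roots) by including enough coefficients in $\mathrm{Der}(\mathcal P)$. Once this is in place the induction closes, since the degree is only raised to a bounded power at each of the at most $n$ elimination steps. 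Alternatively, one may bypass the optimal single-exponential bookkeeping and simply build a cylindrical algebraic decomposition of $\mathbb{R}^m$ adapted to $\mathcal P$: its cells are semi-algebraic, describable by polynomials of degree $(sd)^{2^{O(m)}}$, and the projection of $S$ is a union of such cells, which already yields a bound of the required shape $B^{C(n)}$ — weaker in the dependence on $n$, but sufficient for the use made of Proposition \ref{p4.1} in Section 5. We refer to \cite{BPR} for the details.
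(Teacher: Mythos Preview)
The paper does not prove Proposition~\ref{p4.1} at all: it is quoted verbatim as a result of Basu--Pollack--Roy \cite{BPR} and used as a black box in Section~5. So there is no ``paper's own proof'' to compare against.

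Your sketch is a correct outline of the standard argument behind the quantitative Tarski--Seidenberg principle --- reduce to elimination of one variable, encode the real-root pattern on each fiber via principal subresultant coefficients and Thom signatures, and iterate --- and you correctly flag the delicate point (good specialization of the Sturm--Habicht sequence under degeneracies). The CAD alternative you mention also gives a bound of the required form $B^{C(n)}$, which is all the paper needs. In effect you have written the expository paragraph that the paper omits; since you close by deferring to \cite{BPR} for the details, your treatment is strictly more informative than the paper's, and consistent with it.
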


Next fact deals with the intersection of a semi-algebraic set of small measure and the orbit of a diophantine shift.
\begin{prop}[Corollary 9.7 in \cite{B05}]\label{p4.2}
Let $S\subset[0,1]^{n}$ be semi-algebraic of degree $B$ and ${\rm mes}_{n}S<\eta$.
Let $\omega\in\mathbb{T}^{n}$ satisfy a $DC$ and
\begin{equation*}
  \log B\ll\log N\ll\log\frac{1}{\eta}.
\end{equation*}
Then for any $x_0\in\mathbb{T}^{n}$,
\begin{equation*}
  \#\{k=1,\ldots,N|x_0+k\omega\in S\}<N^{1-\delta}
\end{equation*}
for some $\delta=\delta(\omega)>0$.
\end{prop}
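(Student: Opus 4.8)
The statement is a quantitative transversality/equidistribution result, and the plan is to prove it by induction on the ambient dimension $n$, using the quantitative Tarski--Seidenberg bound (Proposition \ref{p4.1}) to keep all auxiliary semi-algebraic sets of polynomially bounded degree, and classical discrepancy estimates for Diophantine rotations as the base case. For $n=1$: a semi-algebraic subset of $[0,1]$ of degree $B$ is a union of at most $B$ intervals, so $\mathrm{mes}_1 S<\eta$ forces their total length to be $<\eta$; for $\omega$ satisfying the diophantine condition the orbit $(x_0+k\omega)_{1\le k\le N}$ has discrepancy $D_N\lesssim N^{-\kappa}$ for some $\kappa=\kappa(A)>0$ (approximate the indicator of an interval by trigonometric polynomials and use $\|k\omega\|>c|k|^{-A}$), hence $\#\{1\le k\le N:x_0+k\omega\in S\}\le\eta N+BN\,D_N\lesssim \eta N+BN^{1-\kappa}$, which is $<N^{1-\kappa/2}$ for $N$ large because $\log B\ll\log N\ll\log\frac1\eta$; so $\delta=\kappa/2$ works.

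For the inductive step $n-1\to n$, I would take a cylindrical algebraic decomposition of $[0,1]^n$ adapted to $S$: by the quantitative cell count (a consequence of Proposition \ref{p4.1}) there are at most $B^{C(n)}$ cells, $S$ is a union of some of them, and since the cells partition the cube each cell contained in $S$ has measure $<\eta$. It then suffices to bound the number of $k\le N$ with $x_0+k\omega$ in a single sector cell $C=\{(x',x_n):x'\in C',\ \xi_1(x')<x_n<\xi_2(x')\}\subset S$, where $C'\subset[0,1]^{n-1}$ is a lower-dimensional cell and $\xi_1,\xi_2$ are bounded-degree algebraic functions. Writing $\ell=\xi_2-\xi_1$ (a semi-algebraic function of degree $\le B^{C(n)}$ with $\int_{C'}\ell<\eta$), I would split according to whether $x_0'+k\omega'$ lies in $G'=\{x'\in C':\ell(x')>\eta^{1/2}\}$ --- a semi-algebraic set of degree $\le B^{C(n)}$ and measure $<\eta^{1/2}$, to which the $(n-1)$-dimensional inductive hypothesis applies with frequency $\omega'\in\T^{n-1}$ --- or, if not, then $(x_0)_n+k\omega_n$ is trapped in the $k$-dependent interval $(\xi_1(x_0'+k\omega'),\xi_1(x_0'+k\omega')+\eta^{1/2})$, which is a transversality statement about the Diophantine orbit of $\omega_n$ relative to a bounded-degree algebraic graph over the base orbit. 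Summing the contributions over the $\le B^{C(n)}$ cells gives the bound $\#\{1\le k\le N:x_0+k\omega\in S\}<N^{1-\delta}$ with $\delta=\delta(n,\omega)>0$.

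The main obstacle is precisely this last ``moving target'': the interval trapping $(x_0)_n+k\omega_n$ varies with $k$ through the algebraic functions $\xi_i(x_0'+k\omega')$, so one-dimensional equidistribution cannot be invoked directly, and one must re-encode the condition as membership of $x_0+k\omega$ in a new semi-algebraic set of the same dimension but of smaller measure (roughly $\eta^{1/2}$) and larger --- yet still polynomially controlled --- degree, then iterate. Making this terminate requires careful bookkeeping so that the degree stays $\exp(o(\log N))$ while the measure stays below $N^{-C}$ throughout; the hypothesis $\log B\ll\log N\ll\log\frac1\eta$ is exactly what provides the room, and all of the quantitative semi-algebraic geometry of \cite{B05} (Proposition \ref{p4.1} and the structure theory for sets of small measure) is spent on this step.
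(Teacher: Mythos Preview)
The paper does not prove this proposition; it is quoted verbatim as Corollary~9.7 of \cite{B05} and used as a black box in Section~5. So there is no ``paper's own proof'' to compare against, only Bourgain's original argument in \cite{B05}.

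Your outline has the right architecture --- induction on $n$, the $n=1$ case via discrepancy of Diophantine rotations and the fact that a degree-$B$ semi-algebraic subset of $[0,1]$ is a union of $\le B$ intervals --- and this part is correct. The inductive step, however, has a real gap at exactly the point you flag. After splitting off the set $G'$ (handled by the $(n{-}1)$-dimensional hypothesis), you are left with orbit points lying in a thin tube of fiber-width $\le\eta^{1/2}$ around the graph of $\xi_1$. You propose to ``re-encode \ldots\ in a new semi-algebraic set of the same dimension but of smaller measure (roughly $\eta^{1/2}$) \ldots\ then iterate.'' But $\eta^{1/2}>\eta$, so the measure is \emph{larger}, not smaller; and in fact the thin tube is contained in the original cell, so its measure is still $<\eta$ --- you have gained no measure improvement at all, and iterating the same scheme does not terminate.

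What actually resolves the thin-tube case in \cite{B05} is a different ingredient: a quantitative Yomdin--Gromov type covering lemma showing that a semi-algebraic set of degree $B$ and measure $<\eta$ is contained in the $\eta^{1/n}$-neighborhood of at most $B^{C}$ algebraic hypersurfaces of degree $\le B^{C}$, together with a separate count (using the Diophantine condition and a B\'ezout-type bound) of how many orbit points $x_0+k\omega$, $1\le k\le N$, can lie in such a hypersurface neighborhood. This is not an iteration of the main estimate but a genuinely new lemma; your CAD decomposition reaches the same geometric picture (a thin tube around a bounded-degree graph) but does not by itself supply the orbit-vs-hypersurface count. Once you invoke that lemma, your sketch becomes essentially Bourgain's proof.
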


Finally, we will make essential use of the following transversality property.

\begin{lem}[Lemma 9.9 in \cite{B05}]\label{l4.3}
 Let $S\subset[0,1]^{2n}$ be a semi-algebraic set of degree $B$ and ${\rm mes}_{2n}S<\eta, \log B\ll\log\frac{1}{\eta}$.
We denote $(\omega,x)\in[0,1]^{n}\times[0,1]^{n}$ the product variable and $\{e_j|0\leq j\leq n-1\}$ the $\omega$-coordinate vectors.
Fix $\epsilon>\eta^{\frac{1}{2n}}$. Then there is a decomposition $S=S_1\cup S_2$,
 $S_1$ satisfying
 \begin{equation*}
{\rm mes}_{n}({\rm Proj}_\omega S_1)<B^{C}\epsilon
 \end{equation*}
 and $S_2$ satisfying the transversality property
\begin{equation*}
 {\rm mes}_{n}(S_2\cap L)<B^{C}\epsilon^{-1}\eta^{\frac{1}{2n}}
\end{equation*}
for any $n$-dimensional hyperplane $L$ such that $\max\limits_{0\leq j\leq n-1}|{\rm Proj}_L(e_j)|<\frac{\epsilon}{100}$.
\end{lem}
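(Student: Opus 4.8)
The plan is to exploit the product structure of $[0,1]^{2n}$, with coordinates $(\omega,x)\in[0,1]^n\times[0,1]^n$, together with the low-tilt hypothesis on $L$. Any $n$-dimensional (affine) hyperplane $L$ with $\max_{0\le j\le n-1}|{\rm Proj}_L(e_j)|<\epsilon/100$ is nearly parallel to the $x$-subspace, so the $x$-variables are good coordinates on it and we may write $L=\{(\omega_0+\psi_0 x,\,x):x\in\mathbb{R}^n\}$ for a base point $\omega_0$ and an $n\times n$ matrix $\psi_0$ with $\|\psi_0\|\lesssim n\epsilon$. Parametrizing $S\cap L$ by $x$ and controlling the Jacobian (which is $\le(1+\|\psi_0\|^2)^{n/2}\le C_n$), the section volume satisfies ${\rm mes}_n(S\cap L)\le C_n\,{\rm mes}_n\{x\in[0,1]^n:(\omega_0+\psi_0 x,x)\in S\}$. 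Thus everything reduces to bounding this ``tilted fiber volume''.

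Next I would remove the tilt by the shear $\Phi_{\psi_0}(\omega,x)=(\omega-\psi_0 x,\,x)$. It has determinant $1$, so ${\rm mes}_{2n}\Phi_{\psi_0}(S)={\rm mes}_{2n}S<\eta$ (enlarging the cube by a bounded factor to absorb the shift costs only constants); it is an affine substitution, so $\Phi_{\psi_0}(S)$ is again semi-algebraic of degree $\le B^C$; and it straightens the slice, so the tilted fiber above becomes the genuine $\omega$-fiber $(\Phi_{\psi_0}(S))_{\omega_0}$. Writing $g_{\psi_0}(\omega)={\rm mes}_n\big((\Phi_{\psi_0}(S))_\omega\big)$, Fubini (again undoing the shift) gives $\int g_{\psi_0}(\omega)\,d\omega={\rm mes}_{2n}\Phi_{\psi_0}(S)<\eta$, uniformly in the admissible $\psi_0$.

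At this point we only have an $L^1$ bound on $g_{\psi_0}$, whereas we need a pointwise bound at $\omega=\omega_0$, holding for every admissible $(\psi_0,\omega_0)$ after discarding a piece $S_1$ of $S$ whose $\omega$-projection has measure $<B^C\epsilon$. This is where the semi-algebraic estimates of Section 4 enter. The function $g_{\psi_0}$ is not semi-algebraic, so one cannot apply Proposition \ref{p4.1} to its level sets directly; instead one replaces the condition ``${\rm mes}_n(\text{fiber})>\lambda$'' by the semi-algebraic proxy ``the fiber contains an axis-parallel box of side $\ge\delta_0$ together with its $\delta_0$-neighborhood'', where $\delta_0\sim(\lambda/B^C)^{C}$; this is legitimate because a semi-algebraic set of degree $\le B^C$ in $[0,1]^n$ of measure $>\lambda$ contains such a box (cell decomposition). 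The set of triples $(\omega_0,\psi_0,z)$ for which the $\delta_0$-box centered at $z$ maps into $S$ under $x\mapsto(\omega_0+\psi_0 x,x)$ is semi-algebraic of degree $\le B^C$ (a bounded quantifier over a compact box), its total measure is $\lesssim\eta\epsilon^{n^2}$ by Fubini, and, crucially, its fibers over the $\omega_0$ that occur contain a box of size $\sim\delta_0$ in the $(\psi_0,z)$-variables by the built-in margin; hence the $\omega_0$-projection, which is semi-algebraic of degree $\le B^C$ by Proposition \ref{p4.1}, has measure $\lesssim\delta_0^{-(n^2+n)}\eta\epsilon^{n^2}$. Choosing $\lambda\sim\eta^{1/2n}\epsilon^{-1}$ (which is $<1$ since $\epsilon>\eta^{1/2n}$) and tracking the powers, this exceptional $\omega$-set has measure $<B^C\epsilon$; letting $S_1$ be the part of $S$ lying above it and $S_2=S\setminus S_1$, for every admissible $L$ the base point $\omega_0$ is non-exceptional, so ${\rm mes}_n(S_2\cap L)\le C_n g_{\psi_0}(\omega_0)\le C_n\lambda<B^C\epsilon^{-1}\eta^{1/2n}$.

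The main obstacle is the third step: matching the exponents exactly. The crude ``large measure $\Rightarrow$ large box $\Rightarrow$ bound the projection'' passage sketched above inflates $\lambda^{-1}$ by a power depending on $n$, which threatens the target exponent $\eta^{1/2n}$; to land precisely $B^C\epsilon$ and $B^C\epsilon^{-1}\eta^{1/2n}$ one has to organize the cell decomposition of $\Phi_{\psi_0}(S)$ more carefully and argue by induction on dimension, so that the volume-to-box conversion and the Tarski--Seidenberg projection are applied with sharp parameters. Everything else — the graph form of $L$, the unit-Jacobian shear, the Fubini identity, and the degree bookkeeping via Proposition \ref{p4.1} — is routine once this core semi-algebraic input is in place.
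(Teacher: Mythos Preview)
The paper does not prove this lemma at all: it is quoted verbatim as Lemma~9.9 of \cite{B05} and used as a black box in Section~5. There is therefore no ``paper's own proof'' to compare your proposal against; the authors simply import the result from Bourgain's monograph.

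As for your sketch on its own merits: the first two steps (graph parametrization of $L$ by $x$, and the unit-determinant shear $\Phi_{\psi_0}$ reducing tilted fibers to straight ones) are the right normalizations and match how this lemma is typically set up. The real content, as you correctly identify, is the third step, and there your argument is not yet a proof. You replace ``fiber has measure $>\lambda$'' by ``fiber contains a $\delta_0$-box'', invoke cell decomposition, and then project; but you yourself flag that the resulting exponent loss $\delta_0^{-(n^2+n)}$ does not obviously land on the target $B^C\epsilon$ with $\lambda\sim\eta^{1/2n}\epsilon^{-1}$. Saying that one must ``organize the cell decomposition more carefully and argue by induction on dimension'' is a description of what remains to be done, not a proof. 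In Bourgain's original argument the decomposition $S=S_1\cup S_2$ is produced differently: one works with the Yomdin--Gromov style algebraic parametrizations (or equivalently the quantitative cell decomposition underlying Proposition~\ref{p4.1}) to control directly the set of $\omega$ over which the $x$-section is ``thick'' in a transversal sense, and the $\eta^{1/2n}$ comes out of a covering/Fubini balance rather than a crude box-containment surrogate. Until you carry out that balancing with explicit exponents, the proposal has a genuine gap at exactly the point you name.
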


\section{Proof of Anderson localization}

In this section, we give the proof of Anderson localization as in \cite{BG}.

By application of the resolvent identity, we have the following
\begin{lem}\label{l5.1}
Let $I\subset\mathbb{Z}$ be an interval of size $N$ and $\{I_{\alpha}\}$ subintervals of size $M= N^{\delta}, \delta>0$ is small.
Assume $\forall k\in I$, there is some $\alpha$ such that
\begin{equation}\label{c1}
\left[k-\frac{M}{4},k+\frac{M}{4}\right]\cap I\subset I_\alpha
\end{equation}
and $\forall \alpha$,
\begin{equation}\label{c2}
|G_{I_{\alpha}}(n_1,n_2)|<e^{-c_0(|n_1-n_2|-\epsilon_{0}^{\frac{1}{40}}M)}, \quad  n_1,n_2\in I_{\alpha}.
\end{equation}
Then
\begin{equation}\label{c3}
|G_{I}(n_1,n_2)|<2e^{c_0\epsilon_{0}^{\frac{1}{40}}M}, \quad  n_1,n_2\in I ,
\end{equation}
\begin{equation}\label{c4}
|G_{I}(n_1,n_2)|<e^{-\frac{1}{2}c_0|n_1-n_2|}, \quad  n_1,n_2\in I ,|n_1-n_2|>\frac{N}{10}.
\end{equation}
\end{lem}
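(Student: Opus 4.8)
The plan is to run the standard resolvent-identity (Schur complement / paving) argument that turns good local Green's function estimates into a global one. Fix $n_1,n_2\in I$ and write $G=G_I$. The key algebraic identity is that for any $n$ in the interior of $I$, choosing an $\alpha$ with $[n-M/4,n+M/4]\cap I\subset I_\alpha$ (which exists by \eqref{c1}), one has
\begin{equation*}
G_I(n_1,n)=-\sum_{(n',n'')\in\partial I_\alpha}G_{I_\alpha}(n_1,n')\,\Gamma(n',n'')\,G_I(n'',n),
\end{equation*}
valid whenever $n_1\in I_\alpha$, where $\partial I_\alpha$ denotes the boundary pairs and $\Gamma$ collects the (exponentially small, by \eqref{b3}) off-diagonal couplings of $H$ across $\partial I_\alpha$ together with the factor $\epsilon$. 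The point is that $|n_1-n'|$ is small (at most $M$) while $n''$ is forced to be at distance $\gtrsim M/4$ from $n_1$; combined with \eqref{c2} one gains a factor $e^{-c_0(M/4-\epsilon_0^{1/40}M)}\cdot(\text{coupling})$, which for $\epsilon_0$ small is $\le e^{-c_0 M/5}$, say. Iterating this identity, walking from $n_1$ toward $n_2$ in steps of order $M$, produces after $\sim|n_1-n_2|/M$ steps a bound of the form $e^{c_0\epsilon_0^{1/40}M}\cdot(e^{-c_0M/5})^{\lfloor |n_1-n_2|/M\rfloor}$, where the leading prefactor is the single on-diagonal-block contribution $|G_{I_\alpha}(n_1,n_1)|\le e^{c_0\epsilon_0^{1/40}M}$ from \eqref{c2}, and each subsequent block contributes a decaying factor. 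This immediately gives \eqref{c3} (taking $n_1=n_2$ or $|n_1-n_2|\le N/10$ and absorbing the finitely-many-steps combinatorial constant into the factor $2$), and for $|n_1-n_2|>N/10$ it gives \eqref{c4} after noting $\lfloor |n_1-n_2|/M\rfloor\cdot c_0 M/5 \ge \tfrac{1}{2}c_0|n_1-n_2|$ once $\epsilon_0$, and hence the ratio of prefactor to decay, is small enough, and $M=N^\delta\ll N$.

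More carefully, I would set up the iteration as follows. First I establish the a priori bound \eqref{c3} by a telescoping argument: for each $k\in I$ pick $\alpha(k)$ as in \eqref{c1}, and show by induction on $\operatorname{dist}(n_1,n_2)$ that $|G_I(n_1,n_2)|\le e^{c_0\epsilon_0^{1/40}M}(1+\tfrac12)^{\cdots}$ — actually cleaner is to bound the series $\sum_{\text{paths}}$ directly: expanding the resolvent identity fully yields $G_I(n_1,n_2)$ as a sum over self-avoiding-ish sequences $n_1=k_0,k_1,\dots,k_r=n_2$ with each consecutive pair landing in a common $I_{\alpha}$, and each factor bounded using \eqref{c2}; since the number of boundary pairs is $O(1)$ and the coupling across each boundary is $\le \epsilon\|\hat\phi\|_1 e^{-\rho(\cdot)}$, the geometric series converges provided $\epsilon_0$ is small, and one reads off both \eqref{c3} and the exponential decay \eqref{c4}. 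The bookkeeping is: the number of steps $r$ needed to reach distance $|n_1-n_2|$ is $\ge |n_1-n_2|/M$; each step costs the coupling factor times a local Green's function factor $e^{c_0\epsilon_0^{1/40}M}$; so the total is $\le C^r e^{c_0\epsilon_0^{1/40}Mr}\bigl(\epsilon_0 e^{-c_0 M/4}\bigr)^{r}$, and choosing $\epsilon_0=\epsilon_0(\rho,c_0)$ small makes the per-step factor $\le e^{-c_0 M/3}$, whence the product is $\le e^{-(c_0/3)Mr}\le e^{-(c_0/3)|n_1-n_2|}\le e^{-\tfrac12 c_0|n_1-n_2|}$ after adjusting constants; when $|n_1-n_2|\le N/10$ one instead stops after one step and gets \eqref{c3}.

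The main obstacle is purely combinatorial rather than analytic: controlling the number of terms in the iterated resolvent expansion so that the entropy factor $C^r$ (and the accumulation of the slightly-bad local prefactors $e^{c_0\epsilon_0^{1/40}M}$ over $r$ iterations) does not overwhelm the gain $e^{-c_0 M/4}$ per step. This is exactly where smallness of $\epsilon_0=\epsilon_0(\rho)$ is used — it must be small enough that $\epsilon_0 e^{-c_0M/4}\cdot C \cdot e^{c_0\epsilon_0^{1/40}M}<e^{-c_0M/5}$, which holds because $\epsilon_0^{1/40}$ can be made much smaller than any fixed fraction of $c_0$, and the coupling $\epsilon\le\epsilon_0$ kills the $O(1)$ entropy per boundary. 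A secondary technical point is ensuring that the chosen subintervals $I_\alpha$ actually tile $I$ with overlaps of size $\gtrsim M/4$ so that one always has room to move the "free" endpoint a definite distance $\gtrsim M/4$ away from the "localized" endpoint at each step; this is guaranteed by hypothesis \eqref{c1}. Once the geometric series is shown to converge, \eqref{c3} and \eqref{c4} both drop out by the distance bookkeeping above.
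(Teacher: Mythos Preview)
Your overall strategy---iterate the resolvent identity, paving $I$ by the good subintervals $I_\alpha$---is exactly the paper's. But two points in your execution are genuine gaps, and both stem from treating the model as if it were short-range.

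First, your claim that ``the number of boundary pairs is $O(1)$'' is false here: because $S_\phi$ is long-range, the boundary sum in the resolvent identity runs over \emph{all} pairs $m_1\in I_\alpha$, $m_2\in I\setminus I_\alpha$, of which there are $\sim M\cdot N$. Your per-step factor ``$C\cdot e^{c_0\epsilon_0^{1/40}M}\cdot\epsilon_0 e^{-c_0M/4}$'' with $O(1)$ entropy does not survive this; the bare $\epsilon_0$ cannot beat the $M$-many choices of $m_1$. The paper instead uses a dichotomy: if $|m-m_1|\le M/8$ then (since $[m-M/4,m+M/4]\cap I\subset I_\alpha$ and $m_2\notin I_\alpha$) one has $|m_1-m_2|\ge M/8$, so the hopping factor $e^{-\rho|m_1-m_2|}$ gives the smallness; if $|m-m_1|>M/8$ then $|G_{I_\alpha}(m,m_1)|$ is small by \eqref{c2}. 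Summing each half separately gives total boundary weight $<\tfrac12$.

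Second, your derivation of the a priori bound \eqref{c3} is circular. One step of the resolvent identity leaves a dangling factor $|G_I(n'',n_2)|$, which is precisely the unknown; ``stopping after one step'' therefore does not yield \eqref{c3}, and the full path expansion has no reason to converge absent an a priori bound on $G_I$. The paper's fix is that the dichotomy above gives the self-referential inequality
\[
\max_{m,n\in I}|G_I(m,n)|\le e^{c_0\epsilon_0^{1/40}M}+\tfrac12\max_{m,n\in I}|G_I(m,n)|,
\]
from which \eqref{c3} follows immediately (the max is finite since $I$ is finite). Only \emph{then} does the paper iterate---up to $t\le 10N/M$ steps---and plug \eqref{c3} in at the terminal step to bound the remaining $|G_I(n_t,n)|$, obtaining \eqref{c4}.
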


\begin{proof}
For $m,n\in I$, there is some $\alpha$ such that
\begin{equation}\label{c5}
\left[m-\frac{M}{4},m+\frac{M}{4}\right]\cap I\subset I_\alpha.
\end{equation}
By resolvent identity,
\begin{equation}\label{c6}
|G_{I}(m,n)|\leq e^{c_0\epsilon_{0}^{\frac{1}{40}}M}+\sum_{m_1\in I_{\alpha},m_2\notin I_{\alpha}}|G_{I_{\alpha}}(m,m_1)|e^{-\rho|m_1-m_2|}|G_{I}(m_{2},n)|.
\end{equation}
If $|m_1-m|\leq\frac{M}{8}$, then $|m_1-m_2|\geq\frac{M}{8}$, hence
\begin{equation}\label{c7}
\sum_{|m_1-m|\leq\frac{M}{8},m_2\notin I_{\alpha}}|G_{I_{\alpha}}(m,m_1)|e^{-\rho|m_1-m_2|}<M e^{-\rho\frac{M}{8}}e^{c_0\epsilon_{0}^{\frac{1}{40}}M}<\frac{1}{4}.
\end{equation}
If $|m_1-m|>\frac{M}{8}$, then $|G_{I_{\alpha}}(m,m_1)|<e^{-c_0\frac{M}{8}}e^{c_0\epsilon_{0}^{\frac{1}{40}}M}$, hence
\begin{equation}\label{c8}
\sum_{|m_1-m|>\frac{M}{8},m_2\notin I_{\alpha}}|G_{I_{\alpha}}(m,m_1)|e^{-\rho|m_1-m_2|}<M e^{-c_0\frac{M}{8}}e^{c_0\epsilon_{0}^{\frac{1}{40}}M}<\frac{1}{4}.
\end{equation}
By (\ref{c6}), (\ref{c7}), (\ref{c8}),
\begin{equation}\label{c9}
\max_{m,n\in I}|G_{I}(m,n)|<e^{c_0\epsilon_{0}^{\frac{1}{40}}M}+\frac{1}{2}\max_{m,n\in I}|G_{I}(m,n)|.
\end{equation}
(\ref{c3}) follows from (\ref{c9}).

Take $m,n\in I, |m-n|>\frac{N}{10}$, assume (\ref{c5}), by resolvent identity,
\begin{equation}\label{c10}
|G_{I}(m,n)|\leq \sum_{n_0\in I_{\alpha},n_1\notin I_{\alpha}}|G_{I_{\alpha}}(m,n_0)|e^{-\rho|n_0-n_1|}|G_{I}(n_1,n)|
\end{equation}
\begin{equation*}
  \leq Me^{c_0\epsilon_{0}^{\frac{1}{40}}M}\sum_{|m-n_1|>\frac{M}{4}}e^{-c_0|m-n_1|}|G_{I}(n_1,n)|.
\end{equation*}
Repeat the argument in (\ref{c10}), we get
\begin{equation}\label{c11}
 |G_{I}(m,n)|  \leq M^{t}e^{tc_0\epsilon_{0}^{\frac{1}{40}}M}\sum_{|m-n_1|>\frac{M}{4},\ldots,|n_{t-1}-n_t|>\frac{M}{4}}e^{-c_0(|m-n_1|+\cdots+|n_{t-1}-n_t|)}|G_{I}(n_t,n)|,
\end{equation}
where $t\leq 10\frac{N}{M}$.

If $|n-n_t|\leq M$, then by (\ref{c3}), (\ref{c11}),
\begin{equation}\label{c12}
 |G_{I}(m,n)|  \leq M^{t}N^{t}e^{tc_0\epsilon_{0}^{\frac{1}{40}}M}e^{-c_0(|m-n|-M)}2e^{c_0\epsilon_{0}^{\frac{1}{40}}M}
\end{equation}
\begin{equation*}
  \leq e^{20c_0\epsilon_{0}^{\frac{1}{40}}N+20\frac{N}{M}\log N-c_0(|m-n|-M)}\leq e^{-c_0(1-400\epsilon_{0}^{\frac{1}{40}})|m-n|}<e^{-\frac{1}{2}c_0|m-n|}.
\end{equation*}

If $t= 10\frac{N}{M}$, then by (\ref{c3}), (\ref{c11}),
\begin{equation}\label{c13}
 |G_{I}(m,n)|  \leq M^{t}N^{t}e^{tc_0\epsilon_{0}^{\frac{1}{40}}M}e^{-tc_0\frac{M}{4}}2e^{c_0\epsilon_{0}^{\frac{1}{40}}M}
\leq e^{40c_0\epsilon_{0}^{\frac{1}{40}}N-\frac{5}{2}c_0N}<e^{-2c_0N}<e^{-c_0|m-n|}.
\end{equation}

(\ref{c4}) follows from (\ref{c12}), (\ref{c13}). This proves Lemma \ref{l5.1}.
\end{proof}

Now we can prove the main result.
\begin{thm}\label{t5.2}
Consider the lattice operator $H_{\omega}(x)$ of the form
\begin{equation}\label{c14}
H_{\omega}(x)=\tan\pi(x+n\omega)\delta_{nn'}+\epsilon S_\phi.
\end{equation}
%where
%\begin{equation}\label{c15}
%x+n\omega-\frac{1}{2} \notin \mathbb{Z}, \quad\forall n \in \mathbb{Z}.
%\end{equation}
Assume $\omega \in DC$ (diophantine condition),
\begin{equation}\label{c16}
\|k\omega\|>c| k |^{-A},\quad \forall k\in\mathbb{Z}\setminus\{0\}
\end{equation}
and $\phi$  real analytic satisfying
\begin{equation}\label{c17}
  |\hat{\phi}(n)|<e^{-\rho|n|},\quad \forall n \in \mathbb{Z}
\end{equation}
for some $\rho>0$. Fix $x_0\in\mathbb{T}$. Then there is $\epsilon_{0}=\epsilon_{0}(\rho)>0$, such that if
$0<\epsilon<\epsilon_{0}$, for almost all $\omega \in DC$, $H_{\omega}(x_0)$ satisfies Anderson localization.
\end{thm}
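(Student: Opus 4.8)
The plan is to follow the Bourgain--Goldstein semi-algebraic scheme, using the Green's function estimate of Proposition \ref{p3.1} as the single-scale input, and to handle the unboundedness of $H$ by a truncation in energy. By Shnol's theorem it suffices to show that any polynomially bounded generalized eigenfunction $\xi$ with $H_\omega(x_0)\xi=E\xi$ decays exponentially. First I would fix a large $C_0$ and treat energies $|E|\leq C_0$; the constants $\tilde c,c_0$ from Proposition \ref{p3.1} then depend on $C_0$ but this is harmless since at the end we let $C_0\to\infty$ (a given eigenvalue $E$ lies in some bounded window). For a scale $N$ and $x\notin\Omega_N(E)$, Proposition \ref{p3.1} gives a shifted interval on which $G$ decays; combining such intervals via the resolvent identity as in Lemma \ref{l5.1} produces, for each $E$ and each $x$ avoiding a small exceptional set, an interval $I$ of length $\sim N$ around a prescribed site on which $|G_I(n_1,n_2)|<e^{-\frac12 c_0|n_1-n_2|}$ once $|n_1-n_2|>N/10$. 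The standard Poisson-formula argument $\xi_n=-\sum_{n'\in\partial I}G_I(n,n')(\text{coupling})\xi_{n'}$ then forces $|\xi_n|\leq e^{-c|n|}$ provided \emph{both} a neighborhood of $0$ and a neighborhood of $n$ are ``good'' (i.e.\ admit such a $G_I$ bound) for the \emph{same} energy $E$.

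The crux is therefore the elimination of the energy: one must show that for a.e.\ $\omega\in DC$, for every $E$ arising as an eigenvalue, one cannot have simultaneously many ``bad'' intervals near $0$ and near $\infty$. Here I would follow \cite{B05} (and \cite{BG}). Fix a scale $N$; the set of $(\omega,x,E)$ for which a given box of size $N$ at a given location is ``bad'' is, after a polynomial approximation of $\tan$ and $\hat\phi$ (truncating the analytic symbol at exponentially small error, and noting $G$ is a ratio of polynomials in $\cos,\sin$), contained in a semi-algebraic set $S$ of degree $N^{C}$ and measure $<e^{-\tilde cN^\sigma}$. Intersecting over the requisite sites, applying Proposition \ref{p4.1} to project out $E$ and $x$, and then applying Proposition \ref{p4.2} (with $\log B\sim\log N^C\ll\log N'\ll\log(1/\eta)\sim N^\sigma$) shows that for a.e.\ $\omega$ the set of scales $N'$ at which ``doubly bad'' configurations occur is sparse ($<(N')^{1-\delta}$ of the first $N'$). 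More precisely, one builds a double sequence of scales and uses Lemma \ref{l4.3} (transversality in $\omega$) exactly as in \cite{B05} to rule out, for a.e.\ $\omega$, that both a box near the origin and a box near scale $N'$ are bad for one common energy; this is where the $DC$ on $\omega$ and the measure estimate \eqref{b5} are spent.

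Granting that elimination, the endgame is routine: pick $n$ large, choose $N\sim |n|^{1/\delta}$ or so; by the above, for a.e.\ $\omega$ there is a good box $I_0\ni 0$ and a good box $I_n\ni n$, both for energy $E$. Normalizing $\xi_0\neq 0$ (which holds for a generalized eigenfunction, up to a shift of origin), the Poisson formula applied on $I_0$ bounds $|\xi_0|$ by $e^{-cN}$ times $\sup_{n'}|\xi_{n'}|$ over a boundary layer at distance $\sim N$, contradicting $\xi_0\neq 0$ unless $\xi$ is small there; iterating / using the good box $I_n$ then yields $|\xi_n|<e^{-c|n|}$ with $c=c(\rho)>0$ independent of $C_0$ for $N$ large. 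Finally, since any eigenvalue $E$ lies in $|E|\leq C_0$ for some finite $C_0$, letting $C_0\to\infty$ covers all $E\in\mathbb{R}$, establishing \eqref{19} and hence Anderson localization.

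I expect the main obstacle to be making the semi-algebraic description uniform in the unbounded energy variable $E$: the matrix $B_N(x)$ has entries linear in $E$, so $\det B_N$ and the minors $\det B_{n,n'}$ are polynomials in $(\cos,\sin,E,\omega)$ of controlled degree, but one must check that the degree bound $B=N^{O(1)}$ used in Propositions \ref{p4.1}--\ref{p4.2} and Lemma \ref{l4.3} survives the truncation of $\tan$ and of the analytic symbol $\hat\phi$, and that restricting a priori to $|E|\le C_0$ does not destroy semi-algebraicity of the relevant sets. This is exactly the point at which the factorization \eqref{14}--\eqref{16} (singularity absorbed into $A(x)^{-1}$) and the lower bound \eqref{b26} on $|\det B_N|$ are essential, and it is the step I would write out most carefully.
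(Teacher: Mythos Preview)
Your proposal is correct and follows essentially the same route as the paper: Shnol reduction, energy truncation $|E|\le C_0$, Proposition \ref{p3.1} as single-scale input, paving via Lemma \ref{l5.1}, semi-algebraic replacement of the exceptional set through the $B_N$-representation \eqref{b8}--\eqref{b9} (truncating power series of $\cos,\sin$ rather than of $\tan$), energy elimination by showing $E$ must lie $e^{-cN}$-close to ${\rm spec}\,H_{[-j_0,j_0]}(x_0)$ and then taking the finite union $\bigcup_{E'\in\mathcal E_\omega}\Omega(E')$, removal of a null set of $\omega$'s via Lemma \ref{l4.3}, and finally $C_0\to\infty$. The only point where your sketch is slightly compressed is the role of the ``good box near $0$'': in the paper this is not assumed to exist for the eigenvalue $E$, but rather its \emph{non}-existence is exploited (via \eqref{c28}--\eqref{c31}) to pin $E$ to the finite-box spectrum; your phrase ``contradicting $\xi_0\neq0$ unless $\xi$ is small there'' encodes exactly this, so the logic is the same.
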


\begin{proof}
To establish Anderson localization, it suffices to show that if $\xi=(\xi_n)_{n\in\mathbb{Z}},E\in\mathbb{R}$ satisfy
\begin{equation}\label{c18}
\xi_0=1, |\xi_n|<C|n|,\quad |n|\rightarrow\infty,
\end{equation}
\begin{equation}\label{c19}
H(x_0)\xi=E\xi,
\end{equation}
then
\begin{equation}\label{c20}
|\xi_n|<e^{-c|n|},\quad |n|\rightarrow\infty.
\end{equation}

We will first prove (\ref{c20}) for $|E|\leq C_0$. By Proposition \ref{p3.1},
there is $\Omega=\Omega_{N}(E)\subset\mathbb{T},\mathrm{mes}\Omega<e^{-\tilde{c}N^{\sigma}}$, such that if $x\notin \Omega$,
then for some $|m|<\sqrt{N}$,
\begin{equation}\label{c21}
|G_{[-N,N]}(x+m\omega,E)(n_1,n_2)|<e^{-c_{0}(|n_1-n_2|-\epsilon_{0}^{\frac{1}{40}}N)}, \quad |n_1|,|n_2|\leq N.
\end{equation}

Let
\begin{equation}\label{c22}
B(x)(n_1,n_2)=[\cos\pi(x+n_1\omega)][H_{[-N,N]}(x)-E](n_1,n_2),\quad n_1,n_2\in [-N,N]
\end{equation}
and $B_{n_1,n_2}(x)$ be the $(n_1,n_2)$-minor of $B(x)$. Then
\begin{equation}\label{c23}
|G_{[-N,N]}(x+m\omega,E)(n_1,n_2)|=|\cos\pi(x+m\omega+ n_1\omega)|\frac{|\det B_{n_1,n_2}(x+m\omega)|}{|\det B(x+m\omega)|}.
\end{equation}

Truncate power series for $\cos, \sin$ in (\ref{c23}), we may replace (\ref{c21}) by a polynomial of degree at most $N^4$.
Hence $\Omega$ may be assumed semi-algebraic of degree at most $N^5$.
Let $N_1=N^{C_{1}}$, $C_{1}$ is a sufficiently large constant. Then by Proposition \ref{p4.2},
\begin{equation}\label{c24}
\#\{|j|\leq N_1|x_{0}+j\omega\in\Omega\}<N_1^{1-\delta}, \quad\delta>0.
\end{equation}
Using (\ref{c24}), we may find an interval $I\subset[0,N_1]$ of size $N$ such that
\begin{equation}\label{c25}
x_{0}+j\omega\notin\Omega, \quad\forall j\in I \cup(-I).
\end{equation}

If $x_{0}+j\omega\notin\Omega$, then for some $|m_{j}|<\sqrt{N}$,
\begin{equation}\label{c26}
|G_{[a,b]}(x_{0},E)(n_1,n_2)|<e^{-c_{0}(|n_1-n_2|-\epsilon_{0}^{\frac{1}{40}}N)}, \quad n_1,n_2\in[a,b]
\end{equation}
where $[a,b]=[j+m_j-N,j+m_j+N]$.
By (\ref{c18}), (\ref{c19}), (\ref{c26}),
\begin{equation}\label{c27}
|\xi_j|\leq C \sum_{n\in[a,b],n'\notin[a,b]}e^{-c_{0}(|j-n|-\epsilon_{0}^{\frac{1}{40}}N)}e^{-\rho|n-n'|}|n'|
\leq C N_1e^{c_{0}\epsilon_{0}^{\frac{1}{40}}N}e^{-\frac{c_0}{2}N}<e^{-\frac{c_0}{3}N}.
\end{equation}
Denoting $j_0$ the center of $I$, we have
\begin{equation}\label{c28}
1=\xi_{0}\leq\|G_{[-j_0,j_0]}(x_0,E)\|\|R_{[-j_0,j_0]}H(x_0)R_{\mathbb{Z}\setminus[-j_0,j_0]}\xi\|.
\end{equation}

For $|n|\leq j_0$, by (\ref{c27}),
\begin{equation}\label{c29}
|(R_{[-j_0,j_0]}H(x_0)R_{\mathbb{Z}\setminus[-j_0,j_0]}\xi)_{n}|\leq\sum_{|n'|>j_0}e^{-\rho|n-n'|}|\xi_{n'}|
\end{equation}
\begin{equation*}
  \leq\sum_{j_0<|n'|\leq j_0+\frac{N}{2}}e^{-\rho|n-n'|}e^{-\frac{c_0}{3}N}+C\sum_{|n'|>j_0+\frac{N}{2}}e^{-\rho|n-n'|}|n'|
<Ce^{-\frac{c_0}{3}N}+CN_1e^{-\rho\frac{N}{2}}<e^{-\frac{c_0}{4}N}.
\end{equation*}

By (\ref{c28}), (\ref{c29}),
\begin{equation}\label{c30}
\|G_{[-j_0,j_0]}(x_0,E)\|>e^{\frac{c_0}{5}N},
\end{equation}
hence
\begin{equation}\label{c31}
{\rm dist}(E, {\rm spec} H_{[-j_0,j_0]}(x_0))<e^{-\frac{c_0}{5}N}.
\end{equation}

Denote
\begin{equation}\label{c32}
\mathcal{E}_{\omega}=\bigcup_{|j|\leq N_1}\left({\rm spec} H_{[-j_0,j_0]}(x_0)\cap[-2C_0,2C_0]\right).
\end{equation}
It follows from (\ref{c31}) that if $x\notin\bigcup\limits_{E'\in\mathcal{E}_{\omega}}\Omega(E')$,
then for some $|m|<\sqrt{N}$,
\begin{equation}\label{c33}
|G_{[-N,N]+m}(x,E)(n_1,n_2)|<e^{-c_{0}(|n_1-n_2|-\epsilon_{0}^{\frac{1}{40}}N)}, \quad n_1,n_2\in[-N,N]+m .
\end{equation}

Let $N_2=N^{C_{2}}$, $C_{2}$ is a sufficiently large constant.
Suppose
\begin{equation}\label{c34}
x_{0}+n\omega\notin\bigcup_{E'\in\mathcal{E}_{\omega}}\Omega(E'), \quad \forall \sqrt{N_2}<|n|<2N_2,
\end{equation}
then by (\ref{c33}), there are $|m_{n}|<\sqrt{N}$ such that
\begin{equation}\label{c35}
|G_{[-N,N]+n+m_{n}}(x_{0},E)(n_1,n_2)|<e^{-c_{0}(|n_1-n_2|-\epsilon_{0}^{\frac{1}{40}}N)}, \quad n_1,n_2\in[-N,N]+n+m_{n} .
\end{equation}

Let $\Lambda=\bigcup\limits_{\sqrt{N_2}<n<2N_2}([-N,N]+n+m_{n})\supset[\sqrt{N_2},2N_2]$. By Lemma \ref{l5.1},
\begin{equation}\label{c36}
|G_{\Lambda}(x_{0},E)(n_1,n_2)|<2e^{c_0\epsilon_{0}^{\frac{1}{40}}N},\quad n_1,n_2\in\Lambda,
\end{equation}
\begin{equation}\label{c37}
|G_{\Lambda}(x_{0},E)(n_1,n_2)|<e^{-\frac{c_{0}}{2}|n_1-n_2|}, \quad n_1,n_2\in\Lambda ,|n_1-n_2|>\frac{N_2}{10}  .
\end{equation}

For $\frac{1}{2} N_2 \leq j \leq N_2$, by (\ref{c36}), (\ref{c37}),
\begin{equation}\label{c38}
|\xi_{j}|\leq C\sum_{n\in\Lambda,n'\notin\Lambda}|G_{\Lambda}(x_{0},E)(j,n)|e^{-\rho|n-n'|}|n'|
\end{equation}
\begin{equation*}
\leq CN_2\sum_{|n-j|>\frac{N_2}{10}}e^{-\frac{c_{0}}{2}|n-j|}+CN_2\sum_{|n-j|\leq\frac{N_2}{10}}e^{c_0\epsilon_{0}^{\frac{1}{40}}N}e^{-\rho\frac{N_2}{4}}
%\leq CN_2e^{-\frac{1}{20}N_2}+CN_2^{2}e^{c_0\epsilon_{0}^{\frac{1}{40}}N}e^{-\rho\frac{N_2}{4}}
\leq e^{-\frac{c_0}{40}N_2} \leq e^{-\frac{c_0}{40}j}.
\end{equation*}

Now we need to prove (\ref{c34}). Consider for $|j|\leq N_1 $, the set $S_j\subset\mathbb{T}^{2}\times\mathbb{R}$ of $(\omega,x,E')$ where
\begin{equation}\label{c39}
 \| k\omega\|>c|k|^{-A},\quad \forall 0<|k|\leq N,
\end{equation}
\begin{equation}\label{c40}
x\in\Omega(E'),
\end{equation}
\begin{equation}\label{c41}
E'\in{\rm spec} H_{[-j,j]}(x_0)\cap[-2C_0,2C_0].
\end{equation}

Let
\begin{equation}\label{c42}
S={\rm Proj}_{\mathbb{T}^{2}}S_{j}.
\end{equation}
Since ${\rm mes}\Omega(E')<e^{-\tilde{c}N^{\sigma}}$,
\begin{equation}\label{c43}
{\rm mes}S<N_1e^{-\tilde{c}N^{\sigma}}<e^{-\frac{1}{2}\tilde{c}N^{\sigma}}.
\end{equation}
Since $S_{j}$ is a semi-algebraic set of degree at most $N_1^5$, by Proposition \ref{p4.1},
$S$ is a semi-algebraic set of degree at most $N_1^{5C}$.

Take $n=1, B=N_1^{5C}, \eta=e^{-\frac{1}{2}\tilde{c}N^{\sigma}}, \epsilon =N_2^{-\frac{1}{10}}$
in Lemma \ref{l4.3}, we have $S=S_1\cup S_2$,
\begin{equation}\label{c44}
{\rm mes}{\rm Proj}_{\omega}S_1<B^{C}\epsilon<N_1^{C}N_2^{-\frac{1}{10}}<N_2^{-\frac{1}{11}}.
\end{equation}

We study the intersection of $S_{2}$ and sets
\begin{equation}\label{c45}
\{(\omega,x_0+n\omega)|\omega\in[0,1]\},\quad \sqrt{N_2}<|n|<2N_2,
\end{equation}
where $x_0+n\omega$ are considered mod 1. (\ref{c45}) lies in the parallel lines
\begin{equation}\label{c46}
L=L_{m}^{(n)}=\left[\omega=\frac{x}{n}\right]-\frac{m+x_{0}}{n}e_{\omega}, \quad |m|<N_2.
\end{equation}
Since $|{\rm Proj}_{L}e_{\omega}|<\frac{\epsilon}{100}$, by Lemma \ref{l4.3},
\begin{equation}\label{c47}
{\rm mes}(S_{2}\cap L)<B^{C}\epsilon^{-1}\eta^{\frac{1}{2}}<N_1^{C}N_2^{\frac{1}{10}}e^{-\frac{1}{4}\tilde{c}N^{\sigma}}.
\end{equation}
Summing over $n,m$,
\begin{equation}\label{c48}
{\rm mes}\{\omega\in[0,1]|(\omega,x_0+n\omega)\in S_{2},\exists\sqrt{N_2}<|n|<2N_2\}
<N_2^{2}N_1^{C}N_2^{\frac{1}{10}}e^{-\frac{1}{4}\tilde{c}N^{\sigma}}<e^{-\frac{1}{5}\tilde{c}N^{\sigma}}.
\end{equation}
From (\ref{c44}), (\ref{c48}), we exclude an $\omega$-set of measure $N_2^{-\frac{1}{11}}+e^{-\frac{1}{5}\tilde{c}N^{\sigma}}<N_2^{-\frac{1}{12}}$.
Summing over $|j|\leq N_1$, we get an $\omega$-set $\mathcal{R}_{N}, {\rm mes}\mathcal{R}_{N}<N_2^{-\frac{1}{13}}<N^{-10}$,
such that for $\omega\notin\mathcal{R}_{N}$,
\begin{equation}\label{c49}
|\xi_{j}|< e^{-\frac{c_0}{40}|j|},\quad |j|\in\left[\frac{1}{2}N^{C_2},N^{C_2}\right].
\end{equation}

Let
\begin{equation}\label{c50}
\mathcal{R}=\bigcap_{N_0\geq 1}\bigcup_{N\geq N_0}\mathcal{R}_{N},
\end{equation}
then ${\rm mes}\mathcal{R}=0$. If $\omega\notin\mathcal{R}$, then by (\ref{c50}),
there is $N_0\geq 1$ such that $\omega\notin\mathcal{R}_{N}, \forall N\geq N_0$.
By (\ref{c49}),
\begin{equation}\label{c51}
|\xi_{j}|< e^{-\frac{c_0}{40}|j|},\quad |j|\in\bigcup_{N\geq N_0}\left[\frac{1}{2}N^{C_2},N^{C_2}\right]=\left[\frac{1}{2}N_{0}^{C_2},\infty\right).
\end{equation}
This proves (\ref{c20}) for $|E|\leq C_0$. Note that in (\ref{c50}),
$\mathcal{R}=\mathcal{R}(C_0)$. Let
\begin{equation}\label{c52}
\tilde{\mathcal{R}}=\bigcup_{C_0\geq 1}\mathcal{R}(C_0),
\end{equation}
then ${\rm mes}\tilde{\mathcal{R}}=0$. We restrict $\omega\notin\tilde{\mathcal{R}}$.
This proves (\ref{c20}) for all $E\in\mathbb{R}$ and Theorem \ref{t5.2}.
\end{proof}

\subsection*{Acknowledgment}
This paper was supported by  National Natural
Science Foundation of China (No. 11790272 and No. 11771093).

\end{document}